\newtheorem{theorem}{Theorem}[section]
\newtheorem*{theorem*}{Theorem}
\newtheorem{lemma}[theorem]{Lemma}
\newtheorem{proposition}[theorem]{Proposition}
\newtheorem*{proposition*}{Proposition}
\newtheorem{conjecture}[theorem]{Conjecture}
\newtheorem*{conjecture*}{Conjecture}
\newtheorem{thm-dfn}[theorem]{Theorem-Definition}
\newtheorem{claim}[theorem]{Claim}
\theoremstyle{definition}
\newtheorem{remark}[theorem]{Remark}
\newtheorem{remarks}[theorem]{Remarks}
\numberwithin{equation}{section}
\newcommand{\quash}[1]{}  
\newcommand{\frakb}{{\mathfrak b}}
\newcommand{\frakg}{{\mathfrak g}}
\newcommand{\frakl}{{\mathfrak l}}
\newcommand{\frakp}{{\mathfrak p}}
\newcommand{\bbD}{{\mathbb D}}
\newcommand{\bbF}{{\mathbb F}}
\newcommand{\bbQ}{{\mathbb Q}}
\newcommand{\bbZ}{{\mathbb Z}}
\newcommand{\calC}{{\mathcal C}}
\newcommand{\calF}{{\mathcal F}}
\newcommand{\calG}{{\mathcal G}}
\newcommand{\calL}{{\mathcal L}}
\newcommand{\calM}{{\mathcal M}}
\newcommand{\calN}{{\mathcal N}}
\newcommand{\calO}{{\mathcal O}}
\newcommand{\calS}{{\mathcal S}}
\newcommand{\calX}{{\mathcal X}}
\newcommand{\wt}{\widetilde}
\newcommand{\ul}{\underline}
\begin{document}

\title{On parabolic restriction of perverse sheaves}
\author{R. Bezrukavnikov and A. Yom Din}

\maketitle

\centerline{\em{To Professor Masaki Kashiwara with admiration on the occasion of his 70th birthday}}

\tableofcontents

\section{Introduction}

\subsection{Main result}

Let $G$ be a connected reductive group over a finite field $\bbF_q$. Let $P \subset G$ be a parabolic, with Levi quotient $L = P / U$. One has an adjoint pair of functors of \emph{parabolic induction} and \emph{parabolic restriction} $$ pind^P_G : D(L \backslash L) \rightleftarrows D(G \backslash G) : pres^G_P.$$ Here, by $D(H \backslash H)$ we mean the equivariant derived category of constructible $\bar{\bbQ}_{\ell}$-sheaves on $H$, equivariant with respect to the adjoint action of $H$ on itself. 

 Grothendieck's \emph{sheaf-to-function} correspondence
relates these functors to the operators on the space of functions on the corresponding Chevalley groups which send the character of a representation
to the character of its parabolic  induction and parabolic restriction, respectively. 


\medskip

Recall the subcategory $D(G \backslash G)^{\heartsuit} \subset D(G \backslash G)$ of \emph{perverse sheaves}. Lusztig introduced a subcategory $D(G \backslash G)^{\textnormal{ch}} \subset D(G \backslash G)$, the subcategory of \emph{character sheaves}, such that $D(G \backslash G)^{\heartsuit , \textnormal{ch}} := D(G \backslash G)^{\heartsuit} \cap D(G \backslash G)^{\textnormal{ch}}$ matches tightly under the above-mentioned dictionaries with representations in $\textnormal{Rep} (G(\bbF_q))$. 
The definition of this subcategory is geometric, it applies to reductive algebraic groups over an arbitrary field.

\medskip

The functors of parabolic induction and parabolic restriction take character sheaves to character sheaves. One fundamental result of Lusztig is the following:

\begin{theorem*}[\cite{Lu2}]
	The functors $pres^G_P$ and $pind^P_G$, when restricted to the subcategories of character sheaves, are $t$-exact; In other words, they transform perverse character sheaves into perverse character sheaves.
\end{theorem*}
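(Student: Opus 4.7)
The plan is to establish $t$-exactness of parabolic restriction $pres^G_P$ via Braden's hyperbolic restriction theorem, and to deduce the statement for $pind^P_G$ by adjunction.

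Via the standard diagram $L \xleftarrow{\pi} P \xrightarrow{i} G$ --- with $i$ the inclusion and $\pi$ the Levi quotient --- one may write $pres^G_P \simeq \pi_! i^*[d]$ with $d = \dim U$, after passing to the appropriate adjoint-equivariant derived categories. Since $i$ is a closed embedding, $i^*$ is right $t$-exact; since $\pi$ is smooth of relative dimension $d$, $\pi_![d]$ is also right $t$-exact. Hence $pres^G_P$ is already right $t$-exact on all of $D(G \backslash G)^{\heartsuit}$, not just on character sheaves.

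For left $t$-exactness, the plan is to fix a cocharacter $\lambda \colon \bbG_m \to Z(L)$ whose conjugation action on $G$ has $P$ as attracting locus, $L$ as fixed locus, and the opposite parabolic $P^-$ as repelling locus. Every $G$-adjoint-equivariant complex is weakly $\bbG_m$-equivariant via $\lambda$, so Braden's theorem supplies a canonical isomorphism
\[
\pi_! i^*(\calF) \;\simeq\; (\pi^-)_*\, (i^-)^!(\calF),
\]
where $(\pi^-, i^-)$ is the analogous data for $P^-$. After shift by $[d]$, the right-hand side is left $t$-exact: $(i^-)^!$ is left $t$-exact as $i^-$ is a closed embedding, and $(\pi^-)_*[-d]$ is left $t$-exact, being the right adjoint of the $t$-exact functor $(\pi^-)^*[d]$. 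The Braden isomorphism then forces $pres^G_P$ to be left $t$-exact as well, and the statement for $pind^P_G$ follows from $t$-exactness of both $pres^G_P$ and $pres^G_{P^-}$ via the biadjunction $pres^G_{P^-} \dashv pind^P_G \dashv pres^G_P$.

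The main obstacle will be setting up Braden's theorem in the required equivariant setting --- at the level of the functor $D(G \backslash G) \to D(L \backslash L)$ rather than only on the underlying constructible sheaves on $G$ --- and reconciling the naive hyperbolic restriction $\pi_! i^*$ with parabolic restriction as classically defined via the Harish--Chandra correspondence $G \leftarrow G \times^P P \to L$. Shifts and Tate twists must also be tracked carefully so that the two sides of the Braden isomorphism land in the same perverse degree. Notably, this approach would yield $t$-exactness on \emph{all} of $D(G \backslash G)^{\heartsuit}$, which is a priori stronger than Lusztig's original theorem for character sheaves.
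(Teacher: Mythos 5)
Your proposal aims at the stronger result (the paper's Theorem \ref{thm t-exactness}), which is legitimate, but the proposed proof has a genuine gap: it attempts to get $t$-exactness out of Braden's theorem plus formal cohomological-amplitude bounds, and this cannot work.

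\smallskip

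First, the normalization $pres^G_P \simeq \pi_! i^*[d]$ is off by the shift $[d]$. The forgetful description of $pres^G_P$ is $\pi_* \iota^!$ with no shift (Remark \ref{remark non-equi description}); by Proposition \ref{prop Braden}, its Braden partner is $\pi^-_! (\iota^-)^*$, again with no shift. A quick sanity check: applying $\pi_!\iota^*[d]$ to the intersection cohomology sheaf $\overline{\bbQ}_\ell[\dim G]$ (which is adjoint-equivariant) produces $\overline{\bbQ}_\ell[\dim L + d]$, which sits in perverse degree $-d$, so with your shift the functor is visibly \emph{not} $t$-exact even on the simplest equivariant object.

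\smallskip

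Second, and more fundamentally, once the shift is corrected, neither right nor left $t$-exactness of $pres^G_P$ is formal. Writing $pres^G_P = \pi_*\iota^!$: the functor $\iota^!$ is left $t$-exact, but $\pi_*$ drops perverse degree by up to $d = \dim U$ (the fiber dimension), so formally one only gets $D^{\ge 0}\to D^{\ge -d}$. Dually, $\pi^-_!(\iota^-)^*$ formally lands in $D^{\le d}$. Braden identifies these two functors, so the best formal conclusion is amplitude $[-d,d]$, not $[0,0]$. Your argument for left $t$-exactness tries to combine ``$(i^-)^!$ left $t$-exact'' with ``$(\pi^-)_*[-d]$ left $t$-exact'', but composing these yields $(\pi^-)_*(i^-)^![-d]$, not the functor shifted by $[+d]$ that you wrote down; the $+d$ versus $-d$ mismatch is exactly the $2d$ gap that cannot be closed formally.

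\smallskip

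This is not merely a bookkeeping issue. Consider $G = SL_2$, $B$ a Borel with unipotent radical $U\cong \bbA^1$, and $\calF = j_*\overline{\bbQ}_\ell[1]$ the IC sheaf of $U\subset G$. This $\calF$ is perverse and $T$-adjoint-equivariant (hence $\bbG_m$-monodromic, so Braden's theorem applies), but \emph{not} $G$-equivariant. A direct computation gives $\pi_*\iota^!\calF = \delta_e[1]$ and $\pi^-_*(\iota^-)^!\calF = \delta_e[-1]$, neither of which is perverse: the formal amplitude $[-d,d]$ is actually achieved. So any proof of $t$-exactness must genuinely use the $G$-adjoint-equivariance, and Braden alone does not see it. What the paper's proof does, and what your proposal is missing entirely, is the geometric dimension estimate of Section \ref{sec conj cl} (Propositions \ref{prop est for borel} and \ref{prop main est}, i.e.\ the fact that $\dim(\calO^G_g\cap B) = \tfrac12\dim\calO^G_g$). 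This is fed into Proposition \ref{prop left t-exactness} via a Noetherian induction on supports to establish the non-formal left $t$-exactness of $\pi_*\iota^!$ for $G$-invariant supports; only then does Braden (and second adjunction) give the other direction and the statement for $pind^P_G$.
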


The main theorem of this note is the following generalization of the above theorem:

\begin{theorem*}[Theorem \ref{thm t-exactness}]
	The functors $pres^G_P$ and $pind^P_G$ are $t$-exact; In other words, they transform perverse sheaves into perverse sheaves.
\end{theorem*}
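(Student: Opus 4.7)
The plan is to realise parabolic restriction as a hyperbolic restriction functor and invoke a theorem of Braden. Choose a cocharacter $\lambda \colon \bbG_m \to Z(L)$ which is regular with respect to $P$, so that under the action of $\bbG_m$ on $G$ by conjugation through $\lambda$ the fixed-point subgroup is $L$, the attractor is $P$, and the repeller is the opposite parabolic $P^-$. Write $i^{\pm} \colon P^{\pm}\hookrightarrow G$ for the closed inclusions and $p^{\pm} \colon P^{\pm} \twoheadrightarrow L$ for the Levi quotients. Up to a cohomological shift, $pres^G_P$ coincides with the attracting hyperbolic restriction $(p^+)_!(i^+)^*$.

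Every object of $D(G\bs G)$ is, in particular, equivariant for the $\bbG_m$-action through $\lambda$ (via the adjoint $G$-action). The equivariant/monodromic form of Braden's theorem, in the formulation given by Drinfeld--Gaitsgory, therefore supplies a canonical isomorphism
\[
(p^+)_!(i^+)^* \mathcal{F} \;\simeq\; (p^-)_*(i^-)^! \mathcal{F}, \qquad \mathcal{F}\in D(G\bs G).
\]
The left-hand side is right $t$-exact: $(i^+)^*$ is right $t$-exact as $i^+$ is a closed immersion, and $(p^+)_!$ is right $t$-exact by Artin's theorem, since $p^+$ is an affine morphism with fibres $U\simeq\bbA^{\dim U}$. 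By the dual argument the right-hand side is left $t$-exact. Combining the two via Braden's isomorphism forces $pres^G_P$ to be both right and left $t$-exact, hence $t$-exact.

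For $pind^P_G$ one combines two observations. First, $pind^P_G$ is the left adjoint of the (now known to be) $t$-exact functor $pres^G_P$, and hence is right $t$-exact. Second, $pind^P_G$ is defined as a composition of a proper pushforward and a smooth pullback, each of which commutes with Verdier duality up to a shift; consequently $pind^P_G$ is Verdier self-dual up to a shift. Since Verdier duality exchanges left and right $t$-exactness, the right $t$-exactness from the first observation yields left $t$-exactness as well, and $pind^P_G$ is $t$-exact. (Alternatively, one can invoke the second adjunction identifying $pind^P_G$, up to a shift, with the right adjoint of $pres^G_{P^-}$, which is again $t$-exact by the preceding paragraph.)

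The principal technical difficulty lies in applying Braden's theorem in the adjoint-equivariant context and in tracking the cohomological shifts so that the hyperbolic restriction $(p^+)_!(i^+)^*$ and its Braden partner $(p^-)_*(i^-)^!$ are identified with $pres^G_P$ and a shift of $pres^G_{P^-}$ respectively; granted this, the rest is formal $t$-structure manipulation. A secondary point requiring care is the precise normalisation of $pind^P_G$ ensuring that it is exactly, rather than merely approximately, Verdier self-dual in the adjoint-equivariant setting.
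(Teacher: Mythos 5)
There is a genuine gap, and it is at the heart of the argument: the direction of Artin's theorem has been reversed. For an affine morphism $f$, Artin's theorem says that $f_*$ is \emph{right} $t$-exact (it preserves ${}^pD^{\le 0}$) and hence, by Verdier duality, that $f_!$ is \emph{left} $t$-exact (it preserves ${}^pD^{\ge 0}$) --- not the other way around, as your proof asserts. Consequently, with $p^+\colon P\to L$ affine of fibre dimension $\dim U$ and $i^+\colon P\hookrightarrow G$ a closed immersion, the composite $(p^+)_!(i^+)^*$ only satisfies $(p^+)_!(i^+)^*\bigl({}^pD^{\le 0}\bigr)\subset {}^pD^{\le \dim U}$, and dually $(p^-)_*(i^-)^!\bigl({}^pD^{\ge 0}\bigr)\subset {}^pD^{\ge -\dim U}$. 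Braden's isomorphism therefore only gives that hyperbolic restriction of a $\bbG_m$-monodromic perverse sheaf lands in perverse degrees $[-\dim U,\dim U]$, which is far from exactness. That this bound cannot be improved for general $\bbG_m$-monodromic sheaves is shown by the basic example $X=\bbA^1$ with the scaling action and $\calF=\bbQ_\ell[1]$: here $X^+=X$, $X^0=\{0\}$, and the hyperbolic restriction is $R\Gamma_c(\bbA^1,\bbQ_\ell)[1]\simeq\bbQ_\ell[-1]$, which sits in perverse degree $1$, not $0$. (A smaller point: Braden identifies $(p^+)_!(i^+)^*$ with $(p^-)_*(i^-)^!$, i.e. with $pres^G_{P^-}$ on the nose on monodromic objects, not with $pres^G_P$ ``up to a cohomological shift''; by symmetry this does not affect the logic, but the phrasing suggests the shifts are the obstacle, which they are not.)

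What actually closes the gap of size $\dim U$ on each side is the full adjoint $G$-equivariance of $\calF$ (not merely its $\bbG_m$-monodromicity under a regular central cocharacter), exploited through a geometric estimate: the equidimensionality $\dim(\calO^G_g\cap B)=\tfrac12\dim\calO^G_g$ for conjugacy classes meeting a Borel, and the resulting bound (Proposition \ref{prop main est}) on $\dim\bigl(V\cap\pi^{-1}(\ell)\bigr)$ for $G$-invariant $V\subset G^{(d)}$ and $\ell\in L^{(e)}$. This is the content of Section \ref{sec conj cl}, and it is used in the proof of Proposition \ref{prop left t-exactness} via a Noetherian-induction reduction to sheaves smooth on a locally closed $G$-invariant $V$ and the elementary cohomological-amplitude Lemmas \ref{lem cohom amp 1}--\ref{lem cohom amp 2}. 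Braden's theorem then enters as you intend, but only to supply Verdier duality for $pres$ (equivalently second adjunction, Theorem \ref{thm second adj}), converting the hard-won left $t$-exactness of $pres^G_P$ into the remaining half of Theorem \ref{thm t-exactness}. Your final two paragraphs on $pind^P_G$ (right exactness by adjunction, left exactness by self-duality or by second adjunction applied to $P^-$) are in line with the paper and are fine once exactness of $pres$ is in hand; but the first two paragraphs, which are meant to carry the weight of the proof, do not go through as written.
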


Thus, we extend $t$-exactness from character sheaves to all adjoint-equivariant sheaves. 

\medskip

Our method works equally well for $\ell$-adic sheaves, constructible sheaves in the complex-analytic topology and (not necessarily holonomic) $D$-modules (see \S\ref{sec notations sheaf} for the list of ``sheaf-theoretic settings").

\medskip

Our method also works in the close setting of  $G$-equivariant sheaves on the Lie algebra $\frakg$ (see \S\ref{sec Lie alg}). Let us remark that, in the case of $G$-equivariant $D$-modules on $\frakg$, the theorem was established earlier in \cite{Gu}. However, in \emph{loc. cit.} the tools of Fourier transform (which is not available in the group case) and singular support (which has been defined for $\ell$-adic sheaves only recently \cite{Beil}, \cite{Sai}) are used. Sam Gunningham has informed us that he also expects his methods to generalize to the group case.

\medskip

Our method is based on a known expression for the dimension of the intersection of a conjugacy class with a Borel subgroup (see \S\ref{sec conj cl}, and in particular proposition \ref{prop est for borel}), it also relies on Braden's Theorem on the behavior of hyperbolic restriction with respect to Verdier duality \cite{Braden}, \cite{DrGa2}.

\subsection{A conjecture}

In section \ref{sec conj} we recall the fundamental \emph{Harish-Chandra transform} $$ \textnormal{HC}_* : D(G \backslash G) \to D(G \backslash (G/U \times G/U) / T)$$ and the \emph{long intertwining transform} $$ R_! : D(G \backslash (G/U \times G/U) / T) \to D(G \backslash (G/U \times G/U^-) / T).$$ By works \cite{BeFiOs} and \cite{ChYo}, the composition $R_! \circ \textnormal{HC}_*$ is $t$-exact when applied to character sheaves. We propose the following conjectural generalization:

\begin{conjecture*}[Conjecture \ref{conj conj}]
	The functor $R_! \circ \textnormal{HC}_*$ is $t$-exact.
\end{conjecture*}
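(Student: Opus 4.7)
The plan is to build directly on the methods of the main theorem, combining the dimension estimates of \S\ref{sec conj cl} with a systematic use of Braden's theorem to control the long intertwining functor $R_!$. The starting observation is that $\textnormal{HC}_*$ is a refinement of parabolic restriction to a Borel $B$: after averaging out the residual left $G$ and $T$ actions one recovers $pres^G_B$ up to a shift, so Theorem \ref{thm t-exactness} already supplies a sizable portion of the desired $t$-exactness. What is genuinely new is the interplay between $\textnormal{HC}_*$ and $R_!$: the latter is a hyperbolic-restriction-style functor attached to a regular cocharacter of $T$ acting on $G/U$ and $G/U^-$, which are precisely the attracting and repelling loci.

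My main tool for handling $R_!$ would be Braden's theorem, as in the proof of Theorem \ref{thm t-exactness}. For a regular cocharacter of $T$, Braden's theorem yields a canonical isomorphism $R_! \cong R_*$ on weakly $T$-equivariant objects, and consequently provides Verdier self-duality for the composition $R_! \circ \textnormal{HC}_*$. Once self-duality is in place, $t$-exactness reduces to a one-sided cohomological estimate: it suffices to show that $R_! \circ \textnormal{HC}_*$ carries perverse sheaves to objects concentrated in nonpositive perverse degrees.

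Concretely, the steps would be: (i) stratify $G \backslash (G/U \times G/U)/T$ by the Bruhat decomposition indexed by the Weyl group $W$, writing $R_! \circ \textnormal{HC}_*$ as an iterated extension of the stratum contributions; (ii) for each $w \in W$, establish a dimension estimate analogous to Proposition \ref{prop est for borel} bounding $\dim(C \cap UwU)$ for a conjugacy class $C$, compatibly with the orientation provided by the cocharacter used in Braden's theorem; (iii) combine these estimates with the self-duality from step (ii) of the previous paragraph to deduce the required one-sided bound, and hence $t$-exactness.

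The main obstacle, and the reason the statement is only conjectured in the present paper, is the Weyl-indexed dimension estimate in (ii). Proposition \ref{prop est for borel} controls only the intersection of a conjugacy class with a Borel, whereas the conjecture demands a uniform control across all Bruhat cells and interacts with the longest element in a delicate way. On character sheaves this Weyl-theoretic input is effectively supplied by Lusztig's classification and the cuspidal formalism exploited in \cite{BeFiOs} and \cite{ChYo}; for arbitrary perverse sheaves no such a priori structure is available, so the bulk of the work lies in producing a purely geometric substitute, presumably by adapting the counting arguments of \S\ref{sec conj cl} to the doubled horocycle space.
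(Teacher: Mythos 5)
This statement is a \emph{conjecture} in the paper (Conjecture \ref{conj conj}), not a theorem; the authors do not prove it, and the only thing they establish in its direction is the proposition that $j^! \circ R_! \circ \textnormal{HC}_*$ is $t$-exact, which follows from Lemma \ref{lem open closed} together with the identification $\pi_* i^! \textnormal{HC}_* \cong pres^G_B$ and Theorem \ref{thm t-exactness}. So there is no ``paper's proof'' to compare against; what you have written is a program sketch, and you are candid about that. Your framing does line up with the paper's evidence: the observation that composing with $j^!$ (equivalently, with $\pi_* i^!$) recovers $pres^G_B$ is exactly the paper's reason for believing the conjecture, and you correctly identify the Weyl-indexed dimension estimate over Bruhat cells as the missing geometric input.

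One point in your outline deserves a caveat beyond the gap you flag yourself. You assert that Braden's theorem gives $R_! \cong R_*$ and hence Verdier self-duality of $R_! \circ \textnormal{HC}_*$, so that only a one-sided estimate is needed. This is not automatic. Braden's theorem, in the form used in this paper (Proposition \ref{prop Braden}), concerns hyperbolic localization of a $\bbG_m$-action on a single space, comparing $\pi_! \iota^*$ with $\pi^-_* (\iota^-)^!$ for the attracting/repelling correspondences of that action. The functor $R_!$, by contrast, is $s_! r^*$ for a correspondence between two different quotient stacks, $G \backslash (G/U \times G/U)/T$ and $G \backslash (G/U \times G/U^-)/T$, and does not sit inside a single hyperbolic localization datum in an obvious way. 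Moreover, even granting $R_! \cong R_*$, the Verdier dual of $R_! \circ \textnormal{HC}_*$ is $R_* \circ \textnormal{HC}_!$, so one would also need to control the difference between $\textnormal{HC}_!$ and $\textnormal{HC}_*$ — and the map $q$ in the definition of $\textnormal{HC}_*$ is not proper, so these genuinely differ. On character sheaves this is handled in \cite{BeFiOs} and \cite{ChYo} using Lusztig's structure theory, precisely because a purely geometric argument for these identifications on all of $D(G\backslash(G/U\times G/U)/T)$ is not available. So the self-duality step (ii) of your plan is itself a substantial open problem, not a routine reduction.

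In summary: you correctly recognize the statement as conjectural, your intuitions match the paper's heuristics, and your plan is a reasonable research direction; but two of its three steps (the self-duality via Braden, and the Bruhat-cell dimension estimates) are open, not just the one you single out.
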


We also explain how the main theorem of this note (in the Borel case) is an evidence towards this conjecture.

\subsection{Two applications}

In the work \cite{GaLo}, which deals with perverse sheaves on a torus $T$, a $t$-exact and conservative Mellin transform $$ \calM_* : D(T) \to D^b_{coh} (\calC(T))$$ is introduced, where $\calC(T)$ is, roughly, the space of tame local systems of rank $1$ on $T$. The Euler characteristic of a perverse sheaf $\calF \in D(T)^{\heartsuit}$ is interpreted as the generic rank of $\calM_* (\calF)$, and hence in particular is non-negative. A closely related fact is that the convolution of a perverse sheaf $\calF \in D(T)^{\heartsuit}$ with $\calL_{\chi} \in D(T)^{\heartsuit}$, the perverse local system of rank $1$ corresponding to $\chi \in \calC(T)$, is perverse for generic $\chi$: in fact $\calF*\calL_{\chi}=V_\calF(\chi) \otimes \calL_{\chi}$ where the vector space $V_\calF(\chi)$
is the fiber of $ \calM_* (\calF)$ at the point $\chi$.

\medskip

We generalize the above two properties replacing the torus by a general reductive group.

\begin{theorem*}[Theorem \ref{thm euler}]
	Let $\calF \in D(G \backslash G)^{\heartsuit}$, i.e. $\calF$ is a perverse sheaf on $G$, equivariant with respect to the adjoint $G$-action. Then the Euler characteristic of $\calF$ is non-negative.
\end{theorem*}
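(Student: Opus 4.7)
The strategy is to reduce the statement to the torus case \cite{GaLo} by applying parabolic restriction to a Borel subgroup. Fix a Borel $B \subset G$ with unipotent radical $U$ and Levi quotient $T$. By the main theorem (Theorem~\ref{thm t-exactness}), $pres^G_B \calF$ is a perverse sheaf on $T$, and the Gabber--Loeser result \cite{GaLo} then gives $\chi(T, pres^G_B \calF) \geq 0$. It therefore suffices to prove the Euler characteristic identity
$$
\chi(G, \calF) = \chi(T, pres^G_B \calF).
$$

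Since $pres^G_B \calF$ is, up to a cohomological shift absorbed by the perverse normalization, $q_!(\calF|_B)$ for $q : B \to T$ the projection, and $!$-pushforward preserves compactly supported Euler characteristic, we have $\chi(T, pres^G_B \calF) = \chi(B, \calF|_B)$. Thus the desired identity reduces to
$$
\chi(G, \calF) = \chi(B, \calF|_B). \qquad (\star)
$$

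To establish $(\star)$, note that since $\calF$ is adjoint-equivariant, its local Euler characteristic $\chi_{\text{loc}}(\calF)$ is constant on each conjugacy class $C \subset G$. By additivity of the Euler characteristic over constructible stratifications,
$$
\chi(G, \calF) = \sum_C \chi(C) \cdot \chi_{\text{loc}}(\calF)|_C, \qquad \chi(B, \calF|_B) = \sum_C \chi(C \cap B) \cdot \chi_{\text{loc}}(\calF)|_C,
$$
so $(\star)$ follows from the pointwise equality $\chi(C) = \chi(C \cap B)$ for every conjugacy class $C$. For this, observe that the torus $T$ acts on both $C$ and $C \cap B$ by conjugation, with the same fixed locus $C \cap T$ (a Weyl group orbit in $T$, or empty if $C$ contains no semisimple element). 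The fixed-point theorem for $\bbG_m$-actions (Bialynicki--Birula in the complex-analytic case; its \'etale analogue for $\ell$-adic Euler characteristics) then yields $\chi(C) = \chi(C \cap T) = \chi(C \cap B)$.

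The principal subtlety is the identity $(\star)$: once one recognizes it as a consequence of the coincidence $C^T = (C \cap B)^T = C \cap T$ of torus-fixed loci, the main theorem of this note and \cite{GaLo} combine directly to yield the statement.
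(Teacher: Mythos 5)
Your overall strategy is identical to the paper's: reduce to the torus case of \cite{GaLo} via $pres^G_B$ together with the $t$-exactness of Theorem~\ref{thm t-exactness}, which leaves the Euler-characteristic invariance $\text{Eul}(\ul{pres^G_B\calF}) = \text{Eul}(\ul{\calF})$ (Claim~\ref{clm euler the same}) as the remaining content. However, your proof of this invariance is a genuinely different and arguably more transparent argument than the paper's. The paper works through the correspondence of Remark~\ref{remark non-equi description 2}, applying the sheaf-theoretic torus fixed-point lemma (Lemma~\ref{lem euler 2}) on the auxiliary spaces $G\times G/B$, $(G\times G/B)_1$, and $(\tfrac{G/U\times G/U}{T})_1$, picking up matching factors of $|W|=\text{Eul}(\omega_{G/B})$ on the two sides. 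You instead use adjoint-equivariance to observe that the local Euler characteristic of $\ul{\calF}$ is constant on conjugacy classes, isolate the purely geometric identity $\chi_c(C) = \chi_c(C\cap B)$, and deduce it from $C^T = (C\cap B)^T = C\cap T$ and the classical torus-localization of Euler characteristics. Both arguments ultimately hinge on $T$-localization, but yours pushes the sheaf theory out of the picture and reduces it to a statement about conjugacy classes, which is clean and reusable.

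One point you should tighten: the expression $\sum_C \chi(C)\cdot \chi_{\text{loc}}(\calF)|_C$ is a sum over an uncountable family of conjugacy classes and is not literally ``additivity over a constructible stratification.'' The correct formulation is to fix a finite $G$-invariant constructible stratification $G = \bigsqcup_i G_i$ on which $\ul{\calF}$ is locally constant up to shift (refining a stratification pulled back from the Chevalley quotient, say), and write $\text{Eul}(\ul{\calF}) = \sum_i \chi_c(G_i) r_i$ and $\text{Eul}(\iota^*\ul{\calF}) = \sum_i \chi_c(G_i\cap B) r_i$. Since each $G_i$ and each $G_i\cap B$ is $T$-stable with $G_i^T = (G_i\cap B)^T = G_i\cap T$, the torus fixed-point theorem gives $\chi_c(G_i) = \chi_c(G_i\cap T) = \chi_c(G_i\cap B)$, and $(\star)$ follows. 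A second, minor issue: the paper defines $pres^G_B$ via $\pi_*\iota^!$, whereas you write $q_!(\calF|_B)$, implicitly invoking $\pi_!\iota^*$; this is harmless for Euler characteristics (pushforward preserves $\text{Eul}$ and $!$/$*$-independence holds by Laumon in the $\ell$-adic setting, similarly elsewhere), but it deserves a word since $\pi_!\iota^*$ literally computes $pres^G_{B^-}$ by Proposition~\ref{prop Braden}, not $pres^G_B$. With these two points made precise, your proof is correct.
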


We prove this theorem by reducing to the torus case, via parabolic restriction. The result is, as far as we know, new in the $\ell$-adic setting as well as in the holonomic $D$-module setting, and we refer the reader to remark \ref{rem history euler} for its history and previously known cases.

\begin{proposition*}[Proposition \ref{prop gen perv}]
	Let $\calF \in D(G \backslash G)^{\heartsuit}$ be a $G$-equivariant perverse sheaf on $G$ and let $\calG \in D(G \backslash G)^{\heartsuit}$ be a perverse character sheaf with generic central character (here ``generic" depends on $\calF$). Then $\calF * \calG$ is perverse (where $``*"$ denotes convolution on the group).
\end{proposition*}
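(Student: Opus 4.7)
The strategy is to reduce to the torus case, where the required fact is exactly the Gabber--Loeser statement quoted in the introduction. The $t$-exactness of parabolic induction and restriction established in Theorem~\ref{thm t-exactness} is what makes this reduction work outside of the character-sheaf world. Fix a Borel $B \subset G$ with unipotent radical $U$ and Levi quotient $T$, and let $B^-$ denote an opposite Borel. By Lusztig's classification, every perverse character sheaf on $G$ with generic central character is a direct summand of $pind^B_G(\calL_\chi)$ for a suitably generic (in particular, Weyl-regular) multiplicative rank-one local system $\calL_\chi$ on $T$; since a direct summand of a perverse sheaf is perverse, it suffices to prove the proposition for $\calG = pind^B_G(\calL_\chi)$.

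The key technical input is a projection formula of the shape
\[\calF * pind^B_G(\calL) \;\cong\; pind^B_G\!\bigl(pres^G_{B^-}(\calF) * \calL\bigr), \qquad \calF \in D(G\backslash G),\ \calL \in D(T).\]
This identity should be obtained by unwinding the definitions of convolution, parabolic induction and parabolic restriction on the relevant correspondence stacks and applying proper/smooth base change; the analogous identity for characters of representations is a classical manipulation. Getting the signs, shifts, and the appearance of $B$ versus $B^-$ correct is the step which requires the most care, and is the main obstacle in the plan, although no genuinely new geometric input is needed.

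Granted the projection formula, the proof reduces to three invocations of previously established facts. First, by Theorem~\ref{thm t-exactness}, $pres^G_{B^-}(\calF)$ is a perverse sheaf on~$T$. Second, applying the Gabber--Loeser identity $\calH * \calL_\chi = V_\calH(\chi)\otimes \calL_\chi$ on~$T$ with $\calH = pres^G_{B^-}(\calF)$, we find that $pres^G_{B^-}(\calF) * \calL_\chi$ is perverse on~$T$ for all $\chi$ outside a proper closed subset of $\calC(T)$ (namely, the support of the torsion part of $\calM_*(pres^G_{B^-}(\calF))$). Third, applying Theorem~\ref{thm t-exactness} once more, $pind^B_G$ carries this perverse object to a perverse sheaf on $G$, which by the projection formula is precisely $\calF * pind^B_G(\calL_\chi)$. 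The notion of ``sufficiently generic'' central character in the proposition is then exactly the pullback to $\calC(T)$ of the Gabber--Loeser generic locus for $pres^G_{B^-}(\calF)$, and the argument is complete.
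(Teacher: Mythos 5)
Your high-level strategy matches the paper's: reduce to $\calG = pind^B_G(\calL_\chi)$, use a projection formula to pull $\calF$ through parabolic induction, and finish with the torus case and the $t$-exactness from Theorem~\ref{thm t-exactness}. But there is a genuine gap in how you treat the projection formula. You state
\[
\calF * pind^B_G(\calL) \cong pind^B_G\bigl(pres^G_{B^-}(\calF) * \calL\bigr)
\]
for \emph{arbitrary} $\calL \in D(T)$ and dismiss it as a matter of unwinding definitions and base change. That is not correct: this identity is where genericity of $\chi$ enters, and it is where the real work lies. The paper's route is to first establish the honest (unconditional) projection formula for the Harish-Chandra transform,
\[
\calF * pind^B_G(\calL_\chi) \cong \textnormal{CH}\bigl(\textnormal{HC}_*(\calF) * \calL_\chi\bigr),
\]
and then use that the stabilizer of $\chi$ in $W$ is trivial to conclude that $\textnormal{HC}_*(\calF) * \calL_\chi$ is supported on the closed Bruhat stratum, i.e.\ $i_* i^! \cong \mathrm{id}$ on it. Only after that can one rewrite the result as $i_* \pi^*\bigl(pres^G_B\calF * \calL_\chi\bigr)$ and apply $\textnormal{CH}$ to land on $pind^B_G\bigl(pres^G_B\calF * \calL_\chi\bigr)$. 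For non-generic $\calL$, the Harish-Chandra transform of $\calF$ contributes from the higher Bruhat strata, and your formula is simply false; the classical character identity you invoke as motivation is blind to this because the function-theoretic analog collapses these contributions. So the step you flag as "requiring care but no new geometric input" in fact requires both the Harish-Chandra/CH machinery and a further use of the genericity hypothesis, neither of which appears in your sketch. (A smaller sign that the formula wasn't actually derived: you write $pres^G_{B^-}$ where the paper's computation produces $pres^G_B$; this happens not to affect the conclusion since both are $t$-exact, but it is another indication that the identity was guessed rather than verified.)
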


This proposition is also proved by reducing to the torus case, via parabolic restriction and induction.

It is tempting to conjecture that Theorem \ref{thm euler} and Proposition \ref{prop gen perv} are related to the yet unknown "Mellin transform for reductive groups" which
would relate $D(G \backslash G)^{\heartsuit}$ to sheaves on a space parametrizing character sheaves, just as their special case established in \cite{GaLo}
is related to the functor $ \calM_*$.

\subsection{Acknowledgments}

We would like to thank Alexander Braverman, Victor Ginzburg, Sam Gunningham, George Lusztig and Kostiantyn Tolmachov for helpful correspondence.

\section{Notations and conventions}

\subsection{Group-theoretic notations}

We work over an algebraically closed ground field.

\medskip

We fix a connected reductive algebraic group $G$, and a parabolic $L := \frac{P}{U} \xleftarrow{\pi} P \to G$. Thus, we denote by $U$ the unipotent radical of $P$, by $L$ the Levi quotient of $P$, and by $\pi : P \to L$ the quotient map.

\medskip

For a connected affine algebraic group $H$, we always understand $H$ to act on itself via conjugation so $H$-invariant subvarieties and $H$-orbits are understood accordingly; as another example, $H \backslash H$ denotes the quotient stack of $H$ by the adjoint action of $H$. We use the following notations. For $h \in H$, we denote $$ \calO^H_h :=  \{ khk^{-1} \ : \ k \in H\}.$$ Given an integer $d \ge 0$, we denote $$ H^{(d)} := \{ h \in H \ | \ \dim \calO^H_h = d\}.$$ If $H$ acts on a variety $X$, we denote $$ X^h := \{ x \in X \ | \ hx = x\}.$$ If $H$ is reductive, we denote by $X_H$ the variety of Borels in $H$.

\subsection{Sheaf-theoretic notations}\label{sec notations sheaf}

We only consider stacks which are of the form $H\backslash X$ where $H$ is an affine algebraic group acting on a variety $X$. For a stack $\calX$, we denote by $D(\calX)$ the derived category of ``sheaves" on $\calX$, meaning any one of the following ``sheaf-theoretic contexts"\footnote{Except of section \ref{sec app}, where we do not consider the $D$-module setting.}:

\begin{enumerate}
	\item Non-holonomic $D$-module setting: The ground field is of characteristic zero, and we consider the unbounded derived category of all $D$-modules, such as in \cite{DrGa1} etc.
	\item Holonomic $D$-module setting: The ground field is of characteristic zero, and we consider the bounded derived category of holonomic $D$-modules.
	\item $\ell$-adic setting: We fix a prime $\ell$ different from the characteristic of the ground field, and consider the bounded derived category of constructible $\ell$-adic sheaves.
	\item Complex-analytic setting: The ground field is the field of complex numbers, and we consider the bounded derived category of sheaves in the complex-analytic topology, constructible w.r.t. finite algebraic stratifications.
\end{enumerate}

\medskip

In each setting, $D(\calX)$ admits a natural $t$-structure (the ``perverse" one). We denote by $D(\calX)^{=0} , D(\calX)^{\ge 0}, \text{etc.}$ the subcategories of objects concentrated in the specified cohomological degrees.

\medskip

For a smooth stack $\calX$, by a \emph{smooth} sheaf in $D(\calX)$ we will understand a locally constant sheaf in the sheaf-theoretic contexts $(3)$ and $(4)$, and a coherent $D$-module which is locally free as an $\calO$-module - of finite rank in the sheaf-theoretic context $(2)$ and of perhaps infinite rank in the sheaf-theoretic context $(1)$.

\medskip

For $\calF \in D(H \backslash X)$, we denote by $\ul{\calF} \in D(X)$ the corresponding sheaf, i.e. the result of applying to $\calF$ the $t$-exact forgetful functor $p^{\circ} = p^! [-\dim H]$ where $p : X \to H \backslash X$.

\medskip

We will prefer stating things with the $(\pi^! , \pi_*)$-versions (as opposed to $(\pi^* , \pi_!)$) of functors, because they work better in the sheaf-theoretic context $(1)$.

\section{Conjugacy classes and parabolics}\label{sec conj cl}

In this section, we will provide information on some dimensions involving conjugacy classes, which will be used in the proof of proposition \ref{prop left t-exactness}. We recall that we fix a parabolic $L \xleftarrow{\pi} P \to G$.

\begin{proposition}\label{prop est for borel}
	Let $g \in G$, and let $B \subset G$ be a Borel subgroup. Then $$ \dim (\calO^G_g \cap B) = \frac{1}{2} \dim \calO^G_g.$$
\end{proposition}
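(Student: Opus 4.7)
Plan: I would set up the incidence variety
$$ Y := \{ (g', B') \in \calO^G_g \times X_G \ : \ g' \in B'\} $$
and compute $\dim Y$ in two ways via its two projections. The group $G$ acts diagonally by conjugation on $\calO^G_g \times X_G$, preserves $Y$, and acts transitively on both factors; hence both projections $p_1 : Y \to \calO^G_g$ and $p_2 : Y \to X_G$ are $G$-equivariant with equidimensional fibers.

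The fiber of $p_1$ over $g'$ is the Springer-type fixed-point variety $X_G^{g'}$, whose dimension equals $\dim X_G^g$ for every $g' \in \calO^G_g$. The fiber of $p_2$ over $B'$ is $\calO^G_g \cap B'$, whose dimension equals $\dim(\calO^G_g \cap B)$ for every Borel $B'$; surjectivity of $p_2$ uses the classical fact that every conjugacy class meets every Borel. Comparing the two computations gives
$$ \dim \calO^G_g + \dim X_G^g \ = \ \dim X_G + \dim(\calO^G_g \cap B). $$

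At this point the desired identity becomes a purely numerical consequence of Steinberg's dimension formula
$$ \dim X_G^g \ = \ \tfrac{1}{2}\bigl(\dim Z_G(g) - \text{rank}\, G \bigr), $$
combined with the standard $\dim X_G = \tfrac{1}{2}(\dim G - \text{rank}\, G)$ and $\dim \calO^G_g = \dim G - \dim Z_G(g)$: substituting these into the displayed equation, the terms $\tfrac{1}{2}\dim G$ and $\tfrac{1}{2}\text{rank}\, G$ cancel and one reads off $\dim(\calO^G_g \cap B) = \tfrac{1}{2}\dim \calO^G_g$.

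The genuine input, and the only place where work is required, is Steinberg's formula. For semisimple $g$ it is immediate because $X_G^g$ is the flag variety of the connected reductive group $Z_G(g)^0$, which shares a maximal torus with $G$. For general $g = su$ (Jordan decomposition), one identifies $X_G^g = X_{Z_G(s)^0}^u$, reducing to the unipotent case, which is the classical Springer--Steinberg result. Thus the only nontrivial ingredient is the unipotent Springer-fiber dimension formula, which I would simply quote.
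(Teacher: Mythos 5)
Your proof is correct and takes essentially the same route as the paper: the same incidence-variety computation (the variety $\{(h,x)\in\calO^G_g\times X_G : hx=x\}$ with its two projections) yields $\dim \calO^G_g + \dim X_G^g = \dim X_G + \dim(\calO^G_g\cap B)$, after which everything reduces to Steinberg's formula $\dim Z_G(g) = \operatorname{rank} G + 2\dim X_G^g$ via the same Jordan-decomposition reduction to unipotent $g$. The paper simply cites \cite[Chapter 6]{Hu} for this last input, whereas you outline the reduction and note where the equidimensionality of the projection fibers comes from; these are expository rather than structural differences.
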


\begin{proof}
	One has $$ \dim \calO^G_g + \dim X_G^g= \dim X_G + \dim (\calO^G_g \cap B)$$ (because both sides represent the dimension of $\{ (h,x) \in \calO^G_g \times X_G \ | \ h x = x\}$), and thus the desired equality is easily seen to be equivalent to the equality $$ \dim Z_G (g) = \text{rank} (G) + 2 \dim X_G^g.$$ For the latter, see \cite[Chapter 6]{Hu} and refereneces therein (for attributions, consult \emph{loc. cit.}). In \S 6.17 of \emph{loc. cit.} the general statement is derived from that for unipotent $g$. In \S 6.8 of \emph{loc. cit.} the statement for unipotent $g$ is derived from a ``density condition". References for the verification of this condition are given in \S 6.9 of \emph{loc. cit.} (some low characteristic cases need to be treated independently).
\end{proof}

\begin{lemma}[{\cite[Proposition 1.2 (a)]{Lu1}}]\label{lem est}
	Let $g \in G$ and $\ell \in L$. Then\footnote{By convention, the inequality is understood to hold if $\calO^G_g \cap \pi^{-1} (\ell)$ is empty.} $$ \dim (\calO^G_g \cap \pi^{-1} (\ell)) \leq \frac{1}{2} \left( \dim \calO^G_g - \dim \calO^L_{\ell} \right).$$
\end{lemma}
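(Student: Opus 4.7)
The plan is to upgrade the exact Borel equality of Proposition~\ref{prop est for borel} to the parabolic inequality by constructing a suitably large subvariety of $\calO^G_g \cap B$ from $Y := \calO^G_g \cap \pi^{-1}(\ell)$. We may assume $Y$ is non-empty, else the inequality is vacuous by the convention in the footnote. Fix a Borel $B_L \subset L$ containing $\ell$ and set $B := \pi^{-1}(B_L)$, a Borel of $G$. Choose a Levi subgroup $\tilde L \subset P$ mapped isomorphically onto $L$ by $\pi$, and for $m \in L$ write $\tilde m \in \tilde L$ for its lift.

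The main construction is the morphism
\[ \Phi : Z \longrightarrow \calO^G_g \cap B, \qquad (y, m) \longmapsto \tilde m y \tilde m^{-1}, \]
where $Z := \{ (y, m) \in Y \times L \ : \ m\ell m^{-1} \in B_L \}$. Well-definedness is immediate: $\tilde m y \tilde m^{-1}$ is $G$-conjugate to $y$ hence in $\calO^G_g$, and $\pi(\tilde m y \tilde m^{-1}) = m\ell m^{-1} \in B_L$, so it lies in $\pi^{-1}(B_L) = B$.

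Applying Proposition~\ref{prop est for borel} to $L$, the orbit map $L \to \calO^L_\ell$, $m \mapsto m\ell m^{-1}$, realizes $\{m \in L : m\ell m^{-1} \in B_L\}$ as a $Z_L(\ell)$-bundle over $\calO^L_\ell \cap B_L$, of dimension $\tfrac{1}{2} \dim \calO^L_\ell + \dim Z_L(\ell)$. Hence $\dim Z = \dim Y + \tfrac{1}{2} \dim \calO^L_\ell + \dim Z_L(\ell)$. For the fibers of $\Phi$: given $z \in \Phi(Z)$ with a preimage $(y_0, m_0)$, any other preimage $(y, m)$ must satisfy $m \ell m^{-1} = \pi(z) = m_0 \ell m_0^{-1}$, which forces $m \in m_0 Z_L(\ell)$ and then $y = \tilde m^{-1} z \tilde m$. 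Thus every non-empty fiber of $\Phi$ has dimension exactly $\dim Z_L(\ell)$.

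Applying the fiber dimension theorem to each irreducible component of $Z$, we obtain $\dim Z_i \leq \dim \overline{\Phi(Z_i)} + \dim Z_L(\ell)$ and hence $\dim \Phi(Z) \geq \dim Z - \dim Z_L(\ell) = \dim Y + \tfrac{1}{2} \dim \calO^L_\ell$. On the other hand, $\Phi(Z) \subset \calO^G_g \cap B$, so by Proposition~\ref{prop est for borel} applied to $G$, $\dim \Phi(Z) \leq \dim(\calO^G_g \cap B) = \tfrac{1}{2} \dim \calO^G_g$. Combining yields $\dim Y + \tfrac{1}{2} \dim \calO^L_\ell \leq \tfrac{1}{2} \dim \calO^G_g$, which is the desired inequality. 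The only mildly subtle point is the lower bound on $\dim \Phi(Z)$ in the presence of possible reducibility of $Z$, which is handled by the equidimensionality of the fibers of $\Phi$.
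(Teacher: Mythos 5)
Your proof is correct. The overall strategy agrees with the paper's: both reduce to the Borel equality of Proposition~\ref{prop est for borel}, applied once inside $L$ (to get $\dim(\calO^L_\ell \cap B_L) = \tfrac{1}{2}\dim\calO^L_\ell$) and once in $G$ (to bound $\dim(\calO^G_g \cap \pi^{-1}(B_L)) = \tfrac{1}{2}\dim\calO^G_g$, using that $\pi^{-1}(B_L)$ is a Borel of $G$). Where you differ is in the bookkeeping that transports the dimension of $\calO^G_g \cap \pi^{-1}(\ell)$ into the Borel picture. The paper first passes to $\calO^G_g \cap \pi^{-1}(\calO^L_\ell)$ and then uses two incidence varieties over the flag variety $X_L$ to trade $\calO^L_\ell$ for $\calO^L_\ell \cap B_L$. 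You instead build the explicit map $\Phi : Z \to \calO^G_g \cap \pi^{-1}(B_L)$, $(y,m)\mapsto \tilde m y \tilde m^{-1}$, whose nonempty fibers are $Z_L(\ell)$-torsors, and read off the inequality in one stroke from the fiber-dimension theorem; note that $\Phi(Z)$ is in fact exactly $\calO^G_g\cap\pi^{-1}(\calO^L_\ell\cap B_L)$, so this is a genuine repackaging of the paper's count rather than a new idea. Your version is a bit more self-contained (no auxiliary incidence varieties or appeal to $\dim X_L - \dim X_L^\ell = \dim\calO^L_\ell - \dim(\calO^L_\ell\cap B_L)$), at the cost of needing a choice of Levi splitting $\tilde L \subset P$, which is harmless. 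One small point worth stating explicitly: to conclude $\dim Z_i \le \dim\overline{\Phi(Z_i)} + \dim Z_L(\ell)$ for each irreducible component one needs that \emph{every} fiber has dimension $\le \dim Z_L(\ell)$, which you do establish, so the argument is complete.
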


\begin{proof}

	Let us provide a proof which assumes proposition \ref{prop est for borel}; See \cite[Proposition 1.2 (a)]{Lu1} for a different proof.

	Since $$ \dim (\calO^G_g \cap \pi^{-1} (\ell)) = \dim (\calO^G_g \cap \pi^{-1} (\calO^L_{\ell})) - \dim (\calO^L_{\ell}),$$ the inequality to be established is equivalent to $$ \dim (\calO^G_g \cap \pi^{-1}(\calO^L_{\ell})) \overset{?}{\leq} \frac{1}{2} \left( \dim (\calO^G_g) + \dim (\calO^L_{\ell}) \right).$$
	
	Let us fix a Borel subgroup $B \subset L$. By considering the variety $$ Z_1 = \{ (h , x) \in (\calO^G_g \cap \pi^{-1} (\calO^L_{\ell})) \times X_L \ | \ \pi (h)  x = x \}$$ and its two projections, we see that \begin{equation}\label{eq prf est 1} \dim (\calO^G_g \cap \pi^{-1} (\calO^L_{\ell})) + \dim X_L^{\ell} = \dim Z_1 = \dim (X_L) + \dim (\calO^G_g \cap \pi^{-1} (\calO^L_{\ell} \cap B)). \end{equation}
	
	Similarly, by considering the variety $$ Z_2 = \{ (h,x) \in \calO^L_{\ell} \times X_L \ | \ h x = x\}$$ and its two projections, we see that \begin{equation}\label{eq prf est 2} \dim \calO^L_{\ell} + \dim X_L^{\ell} = \dim Z_2 = \dim X_L + \dim (\calO^L_{\ell} \cap B). \end{equation} We thus have: $$ \dim (\calO^G_g \cap \pi^{-1} (\calO^L_{\ell})) \overset{\text{eq. }\ref{eq prf est 1}}{=} \dim (\calO^G_g \cap \pi^{-1} (\calO^L_{\ell} \cap B)) + \left( \dim X_L - \dim X_L^{\ell} \right) \overset{\text{eq. }\ref{eq prf est 2}}{=} $$ $$ = \dim (\calO^G_g \cap \pi^{-1} (\calO^L_{\ell} \cap B)) + \left( \dim \calO^L_{\ell} - \dim (\calO^L_{\ell} \cap B) \right) \overset{\text{prop. }\ref{prop est for borel}}{=}$$ $$ = \dim (\calO^G_g \cap \pi^{-1} (\calO^L_{\ell} \cap B)) + \frac{1}{2} \dim \calO^L_{\ell} \leq \dim (\calO^G_g \cap \pi^{-1} (B)) + \frac{1}{2} \dim \calO^L_{\ell} \overset{\text{prop. }\ref{prop est for borel}}{=} $$ $$=  \frac{1}{2} \dim \calO^G_g + \frac{1}{2} \dim \calO^L_{\ell}.$$ 
\end{proof}

\begin{proposition}\label{prop main est}
	Let $d,e \ge 0$, and let $V \subset G^{(d)}$ be a $G$-invariant subvariety. Then for every $\ell \in L^{(e)}$ one has\footnote{By convention, the inequality is understood to hold if $V \cap \pi^{-1} (\ell)$ is empty.} $$ \dim (V \cap \pi^{-1} (\ell)) \leq \dim V - \dim (V \cap \pi^{-1} (L^{(e)})).$$
\end{proposition}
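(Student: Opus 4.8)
The plan is to combine two elementary dimension estimates for a \emph{single} conjugacy class (both immediate consequences of Lemma~\ref{lem est}) with one fibration argument built on the map $G\times^P(V\cap\pi^{-1}(L^{(e)}))\to V$. Write $d=\dim\calO^G_g$ for $g\in V$ (constant on $V$) and set $W:=V\cap\pi^{-1}(L^{(e)})$; one may assume $W\neq\emptyset$, since otherwise $V\cap\pi^{-1}(\ell)\subseteq W$ is empty and there is nothing to prove. After fixing a Levi subgroup inside $P$, the coset $\pi^{-1}(\ell)$ has dimension $\dim(G/P)$, and the key structural input is that \emph{every element of $\pi^{-1}(\ell)$ has semisimple part $G$-conjugate to that of $\ell$} (the semisimple elements of $P$ with a given image in $L$ form a single $U$-conjugacy class, as two maximal tori of a connected solvable group are conjugate by the unipotent radical). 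Consequently only finitely many $G$-conjugacy classes meet $\pi^{-1}(\ell)$ — they correspond to the finitely many unipotent classes in $Z_G(\ell_s)$ — so $V\cap\pi^{-1}(\ell)$ is a \emph{finite} union of sets $\calO\cap\pi^{-1}(\ell)$ with $\calO\subseteq V$, each of dimension $\le\tfrac12(d-e)$ by Lemma~\ref{lem est}. Hence $\dim\bigl(V\cap\pi^{-1}(\ell)\bigr)\le\tfrac12(d-e)$.

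The second estimate is: for every conjugacy class $\calO\subseteq V$ meeting $\pi^{-1}(L^{(e)})$ one has $\dim(\calO\cap\pi^{-1}(L^{(e)}))\le\tfrac12(d+e)$. Again the point is a finiteness: the set $\pi(\calO\cap P)$ consists of elements whose semisimple part lies in one fixed $G$-conjugacy class, and such semisimple elements of $L$ fall into finitely many $L$-classes (intersect a maximal torus and use the Weyl group); so $\pi(\calO\cap P)$ meets only finitely many $L$-conjugacy classes $C$, and for each $C\subseteq L^{(e)}$ the projection $\calO\cap\pi^{-1}(C)\to C$ is $P$-equivariant with $P$ acting transitively on $C$, whence $\dim(\calO\cap\pi^{-1}(C))=\dim C+\dim(\calO\cap\pi^{-1}(\ell_C))\le e+\tfrac12(d-e)=\tfrac12(d+e)$ by Lemma~\ref{lem est}. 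Taking the finite union gives the claim.

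For the fibration, consider $\alpha\colon G\times^P W\to V$, $[g,w]\mapsto gwg^{-1}$, with image contained in $V$; for $x$ in the image $\alpha^{-1}(x)\cong\{gP\in G/P:\ g^{-1}xg\in W\}$, and then $\calO^G_x\subseteq V$ meets $\pi^{-1}(L^{(e)})$. Running the same incidence-variety computation as in the proofs of Proposition~\ref{prop est for borel} and Lemma~\ref{lem est} (project $\{(x',gP)\in\calO^G_x\times G/P:\ g^{-1}x'g\in\pi^{-1}(L^{(e)})\}$ onto its two factors) yields
\[
\dim\{gP:\ g^{-1}xg\in\pi^{-1}(L^{(e)})\}=\dim(G/P)+\dim\bigl(\calO^G_x\cap\pi^{-1}(L^{(e)})\bigr)-d\ \le\ \dim(G/P)-\tfrac12(d-e),
\]
using the second estimate. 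Since $\{gP:\ g^{-1}xg\in W\}\subseteq\{gP:\ g^{-1}xg\in\pi^{-1}(L^{(e)})\}$, every fiber of $\alpha$ has dimension at most $\dim(G/P)-\tfrac12(d-e)$. Applying this over each irreducible component $W_i$ of $W$ (each $P$-stable, as $P$ is connected), and using $\dim(G\times^P W_i)=\dim W_i+\dim(G/P)$ together with the fiber-dimension theorem, gives $\dim\overline{\alpha(G\times^P W_i)}\ge\dim W_i+\tfrac12(d-e)$; maximizing over $i$, $\dim V\ge\dim W+\tfrac12(d-e)$. Combining with the first estimate, $\dim(V\cap\pi^{-1}(\ell))\le\tfrac12(d-e)\le\dim V-\dim W$, which is the assertion.

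I expect the crux to be the recognition that the two "union over conjugacy classes" estimates are sharp enough \emph{only because} the families of classes involved are finite; a naive application of Lemma~\ref{lem est} to $V$ itself (rather than to a single class, once the $\alpha$-fibration has already accounted for "the size of $V$") yields merely $\dim(V\cap\pi^{-1}(\ell))\le\dim V-\tfrac{d+e}{2}$, which is insufficient since $\dim W$ can exceed $\tfrac{d+e}{2}$. Designing the fibration $\alpha$ so that its fiber dimension is governed by exactly $\dim(G/P)-\tfrac12(d-e)$ is the other essential idea; the remaining points (the irreducibility bookkeeping and the incidence counts) are routine and run parallel to the proofs already given in this section.
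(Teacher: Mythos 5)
Your proof is correct, but it takes a genuinely different route to the key inequality
\[
\tfrac12(d-e)\ \leq\ \dim V - \dim\bigl(V\cap\pi^{-1}(L^{(e)})\bigr).
\]
Both arguments first reduce the proposition to this single bound via the observation (which you also record) that Lemma~\ref{lem est} plus finiteness of the classes meeting $\pi^{-1}(\ell)$ gives $\dim(V\cap\pi^{-1}(\ell))\le\tfrac12(d-e)$. Where the paper and your argument diverge is in how this bound on $\dim W$ (with $W:=V\cap\pi^{-1}(L^{(e)})$) is obtained. The paper's route is to bound $\dim W\le \dim Z + \tfrac12(d-e)$, where $Z$ is the closure of $\pi(W)$ in $L^{(e)}$, again using the fiber estimate, and then to prove $\dim Z - e\le\dim V - d$ by passing to the Chevalley characteristic map $\chi\colon G\to W\backslash\!\backslash T$: both sides are dimensions of constructible subsets of $W\backslash\!\backslash T$, one contained in the other. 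Your route avoids the adjoint quotient entirely and instead builds the induction bundle $\alpha\colon G\times^P W\to V$; you control its fiber dimensions by an orbit-by-orbit estimate $\dim(\calO\cap\pi^{-1}(L^{(e)}))\le\tfrac12(d+e)$ (which is in substance the equivalent reformulation of Lemma~\ref{lem est} already written down in the paper's proof of that lemma, upgraded from a single $L$-class to $L^{(e)}$ using the finiteness of $L$-classes met by $\pi(\calO\cap P)$), and then the fiber-dimension theorem applied componentwise to $\alpha$ delivers $\dim V\ge\dim W+\tfrac12(d-e)$. The trade-off: the paper's characteristic-map argument is shorter and transfers verbatim to the Lie-algebra case (there $\chi$ becomes $\frakg\to\frakt/\!/W$), whereas your bundle argument is somewhat longer but stays entirely within conjugation geometry on the group and makes the role of the estimate $\tfrac12(d+e)$ very transparent. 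One small caveat: your parenthetical justification that semisimple elements of $P$ with a given image in $L$ form a single $U$-conjugacy class (``two maximal tori of a connected solvable group...'') is slightly misphrased — the relevant conjugacy is that of Levi factors of $P$ under $U$, not of tori in a solvable group — but the conclusion you draw from it is standard and correct, and this same finiteness is also tacitly used by the paper.
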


\begin{proof}
	For any given $\ell \in L^{(e)}$, there are only finitely many $G$-orbits $\calO \subset V$ for which $\calO \cap \pi^{-1} (\ell) \neq \emptyset$. Hence, from lemma \ref{lem est} we deduce \begin{equation}\label{ineq V 0} \dim (V \cap \pi^{-1} (\ell)) \leq \frac{1}{2} (d - e). \end{equation} Thus, in order to establish the desired inequality, it is enough to establish \begin{equation}\label{ineq V 1} \frac{1}{2} (d-e) \overset{?}{\leq} \dim V - \dim (V \cap \pi^{-1} (L^{(e)})). \end{equation} Let us denote by $Z$ the closure inside $L^{(e)}$ of $\pi (V \cap \pi^{-1} (L^{(e)}))$. By inequality \ref{ineq V 0} we have $$ \dim (V \cap \pi^{-1} (L^{(e)})) \leq \dim Z + \frac{1}{2} (d-e),$$ and thus inequality \ref{ineq V 1} will follow if we establish that \begin{equation} \dim Z -e \overset{?}{\leq} \dim V - d.\end{equation} However, this inequality is clear, since the right hand side is equal to the dimension of the closure of $\chi (V) \subset W \backslash \backslash T$, where $\chi : G \to W \backslash \backslash T$ is the usual "characteristic" map, while the left hand side is equal to the dimension of a subvariety of this closure.
\end{proof}

\section{Parabolic restriction and parabolic induction}

In this section we recall basic facts about parabolic restriction and parabolic induction of adjoint-equivariant sheaves. We recall that we fix a parabolic $L \xleftarrow{\pi} P \to G$.

\subsection{Parabolic restriction}

Recall that the \emph{parabolic restriction} functor $$pres^G_P : D(G \backslash G) \to D(L \backslash L)$$ is given as $q_* p^!$ where \begin{equation}\label{eq res ind corres}
\xymatrix{ & P \backslash P \ar[dl]_{p} \ar[dr]^{q} & \\G \backslash G & & L \backslash L} \end{equation} and the actions of the groups on themselves are via conjugation.

Let us give two non-equivariant descriptions of parabolic restriction, which we will use later.

\begin{remark}\label{remark non-equi description}
	Consider the correspondence \begin{equation}\label{eq unequi corres 1} \xymatrix{ & P \ar[dl]_{\iota} \ar[dr]^{\pi} & \\ G & & L}.\end{equation} Then one has a $2$-commutative diagram $$ \xymatrix{ D(G \backslash G) \ar[r]^{pres^G_P} \ar[d] & D(L \backslash L) \ar[d]\\ D(G) \ar[r]^{\pi_* \iota^!} & D(L)} $$ where the vertical arrows are the $t$-exact forgetful functors.
\end{remark}

\begin{remark}\label{remark non-equi description 2}
	Consider the correspondence $$ \xymatrix{ & (G \times G/P)_1 \ar[ld]_{\ul{p}} \ar[rd]^{\wt{q}} & & \\ G & & (\frac{G/U \times G/U}{L})_1 & L \ar[l]_(0.25){i}} $$ where $$ (G \times G/P)_1 := \{ (g , xP) \in G \times G/P \ | \ x^{-1} gx \in P\},$$ $ \frac{G/U \times G/U}{L}$ denotes the quotient by the diagonal right action, and $$ (\frac{G / U \times G/U}{L})_1 := \{ (xU , yU) \in \frac{G/U \times G/U}{L} \ | \ x^{-1} y \in P\}.$$ The maps are $$ \ul{p} (g,xP) = g , \ \wt{q} (g,xP) = (xU , gxU) , \ i(\ell) = (U , \ell U).$$ Then one has a $2$-commutative diagram $$ \xymatrix{ D(G \backslash G) \ar[r]^{pres^G_P} \ar[d] & D(L \backslash L) \ar[d]\\ D(G) \ar[r]^{i^! \wt{q}_* \ul{p}^!} & D(L)} $$ where the vertical arrows are the $t$-exact forgetful functors.
\end{remark}

\subsection{Parabolic induction}

Recall that $pres^G_P$ admits a left adjoint, denoted $$pind^P_G : D(L \backslash L) \to D(G \backslash G),$$ given by $p_* q^!$ in terms of the correspondence \ref{eq res ind corres}. Notice that the functor $pind^P_G$ is Verdier self-dual (since it the composition of a pull-back w.r.t. a smooth morphism of relative dimension $0$ and a push-forward w.r.t. a proper morphism).

\quash{Let us, for completeness, give a non-equivariant description of parabolic induction:

\begin{remark}\label{remark non-equi description 3}
	Consider the correspondence $$ \xymatrix{ & (G \times G/U)_1 \ar[d]_{\rho} \ar[rdd]^{q^{\prime}} & \\ & (G \times G/P)_1 \ar[ld]_{\ul{p}} & \\ G & & L} $$ where $$ (G \times G/U)_1 := \{ (g , xU) \in G \times G/U \ | \ x^{-1} gx \in P\}.$$ The yet-unspecified maps are $$\rho(g , xU) = (g , xP) , \ q^{\prime} (g , xU) = \pi (x^{-1} gx).$$ Then one has a $2$-commutative diagram $$ \xymatrix{ D(L \backslash L) \ar[r]^{pind^P_G} \ar[d] & D(G \backslash G) \ar[d] \\ D(L) \ar[r]^{(q^{\prime})^! (\rho^{\circ})^{-1} \ul{p}_*} & D(G)} $$ where the vertical arrows are the $t$-exact forgetful functors and by $(\rho^{\circ})^{-1} \calM$ we mean the (unique up to a unique isomorphism) object $\calN$ for which $\rho^{\circ} \calN \cong \calM$ , if such exists.
\end{remark}}

\subsection{Braden's hyperbolic localization and second adjunction}

Let $P^- \subset G$ be a parabolic opposite to $P$. Notice that the Levi factors of $P$ and $P^-$ are then canonically identified (both isomorphic to $P \cap P^-$). Denoting by $\iota^- , \pi^-$ the arrows as in the diagram \ref{eq unequi corres 1} but with $P$ replaced by $P^-$, Braden's hyperbolic localization theorem yields (as noted in \cite[\S 0.2.1]{DrGa2}):

\begin{proposition}\label{prop Braden}
	One has an isomorphism of functors $$ \pi_! \circ \iota^* \cong \pi^-_* \circ (\iota^-)^! : \ D(G)^{G\text{-mon}} \to D(L)$$ where $D(G)^{G\text{-mon}} \subset D(G)$ denotes the full subcategory generated by the image of the forgetful functor $D(G \backslash G) \to D(G)$.
\end{proposition}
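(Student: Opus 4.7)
The plan is to realize the two functors as Braden's two versions of hyperbolic restriction for a suitable $\mathbb{G}_m$-action on $G$, and then invoke Braden's theorem.

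First, I would choose a cocharacter $\lambda : \mathbb{G}_m \to Z(L)^\circ \subset G$ sufficiently generic (regular with respect to the pair $(P, P^-)$); equivalently, a cocharacter whose associated parabolic is $P$ and opposite parabolic is $P^-$. Consider the action of $\mathbb{G}_m$ on $G$ by $t \cdot g := \lambda(t) g \lambda(t)^{-1}$. A standard computation shows:
\begin{itemize}
\item The fixed-point locus is $Z_G(\lambda) = L \subset G$ (viewed inside $G$ as $P \cap P^-$).
\item The attracting locus $\{g \in G \ | \ \lim_{t\to 0} \lambda(t) g \lambda(t)^{-1} \text{ exists}\}$ coincides with $P$, and the limit map is precisely $\pi : P \to L$.
\item The repelling locus coincides with $P^-$, with limit map $\pi^- : P^- \to L$.
\end{itemize}
Thus the geometric data $(\iota, \pi)$ and $(\iota^-, \pi^-)$ are exactly the attracting/repelling diagrams attached to this $\mathbb{G}_m$-action.

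Next I would invoke Braden's theorem (\cite{Braden}, \cite{DrGa2}): for any $\mathbb{G}_m$-monodromic sheaf $\calF$ on a variety with a $\mathbb{G}_m$-action, the hyperbolic localizations computed via attraction and via repulsion are canonically isomorphic. In our notation this gives exactly
\[ \pi_! \iota^* \calF \;\cong\; \pi^-_* (\iota^-)^! \calF. \]
Both sides are $L$-equivariant for the adjoint action on $L$, since this action commutes with conjugation by $\lambda$, and the isomorphism is natural.

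Finally I would check that the hypothesis of $G$-monodromicity is enough to ensure the $\mathbb{G}_m$-monodromicity required by Braden's theorem. Any object of $D(G \backslash G)$ is tautologically $\mathbb{G}_m$-equivariant via the composition $\mathbb{G}_m \xrightarrow{\lambda} G$ acting by conjugation; hence it is $\mathbb{G}_m$-monodromic. Since $D(G)^{G\text{-mon}}$ is by definition generated by the essential image of the forgetful functor $D(G \backslash G) \to D(G)$, the same holds for all objects in $D(G)^{G\text{-mon}}$.

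The only nontrivial point is the geometric identification in the first step—that the attracting variety for $\lambda$-conjugation is precisely $P$ with contraction map $\pi$. This is classical (it is essentially one of the definitions of the parabolic attached to a cocharacter), so once set up correctly the proposition is a direct application of Braden's theorem. I do not anticipate a serious obstacle beyond making the cocharacter choice and the resulting dynamical pictures explicit.
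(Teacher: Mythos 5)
Your argument is correct and is essentially the same as the paper's: the paper does not spell out a proof but simply cites \cite[\S 0.2.1]{DrGa2}, and the cocharacter-conjugation picture you describe (with fixed locus $L$, attractor $P$ with contraction $\pi$, repeller $P^-$ with contraction $\pi^-$, and $G$-monodromicity implying $\bbG_m$-monodromicity via $\lambda$) is precisely the content of that reference. Nothing to add.
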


From this proposition one can deduce that the functors $pres^G_P$ and $pres^G_{P^-}$ are Verdier dual to each other, and from this one obtains:

\begin{theorem}[Second adjunction]\label{thm second adj}
	The functor $pres^G_{P^-}$ is left adjoint to the functor $pind^P_G$.
\end{theorem}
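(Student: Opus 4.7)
The plan is to follow the strategy signalled just before the statement: first show that $pres^G_P$ and $pres^G_{P^-}$ are exchanged by Verdier duality, and then combine this with the Verdier self-duality of $pind^P_G$ (already observed in the excerpt) and the first adjunction $(pind^P_G , pres^G_P)$ to deduce the desired one.

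First I would work non-equivariantly, using Remark \ref{remark non-equi description}, which realises $pres^G_P$ as $\pi_* \iota^!$ on the underlying non-equivariant sheaves, and, applied with $P$ replaced by $P^-$, realises $pres^G_{P^-}$ as $\pi^-_* (\iota^-)^!$. Using the standard Verdier-duality exchanges $\bbD \pi_* \cong \pi_! \bbD$ and $\bbD \iota^! \cong \iota^* \bbD$, one gets
\[
\bbD \circ (\pi_* \iota^!) \circ \bbD \cong \pi_! \iota^* .
\]

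The crucial ingredient is now Braden's Proposition \ref{prop Braden}, which gives a canonical isomorphism $\pi_! \iota^* \cong \pi^-_* (\iota^-)^!$ on $D(G)^{G\text{-mon}}$, a category which contains the essential image of the forgetful functor from $D(G \backslash G)$. Combining, one obtains $\bbD \circ pres^G_P \circ \bbD \cong pres^G_{P^-}$ after forgetting equivariance. I would then check that this isomorphism lifts to the adjoint-equivariant categories, using that the forgetful functors $D(G \backslash G) \to D(G)$ and $D(L \backslash L) \to D(L)$ are $t$-exact and conservative, and that both $pres^G_P$ and $pres^G_{P^-}$ are canonically defined as functors between equivariant categories, so the non-equivariant identification can be upgraded once the comparison maps have been constructed equivariantly.

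With the duality $\bbD \circ pres^G_P \circ \bbD \cong pres^G_{P^-}$ in hand, the theorem is then formal: starting from the first adjunction
\[
\mathrm{Hom}(pind^P_G \calG , \calF) \cong \mathrm{Hom}(\calG , pres^G_P \calF) ,
\]
applying $\bbD$ to each argument, inserting the self-duality $\bbD \circ pind^P_G \cong pind^P_G \circ \bbD$ and the above Verdier-duality identification, and relabelling $\calF' = \bbD \calF$, $\calG' = \bbD \calG$ yields
\[
\mathrm{Hom}(pres^G_{P^-} \calF' , \calG') \cong \mathrm{Hom}(\calF' , pind^P_G \calG') .
\]
The main obstacle I expect is reconciling Braden's theorem with the chosen sheaf-theoretic context: in the non-holonomic $D$-module setting Verdier duality is not defined on the whole of $D(\calX)$, so one has to argue by passing to a generating subcategory of compact/holonomic objects and then extending the adjunction formally by (co)continuity.
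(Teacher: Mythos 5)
Your proposal follows exactly the route the paper itself indicates (and leaves unproved in detail): use Braden's Proposition \ref{prop Braden} to identify $\bbD \circ pres^G_P \circ \bbD$ with $pres^G_{P^-}$, then combine with the Verdier self-duality of $pind^P_G$ and the first adjunction to obtain the second adjunction by dualizing Hom's. The paper states this as a one-sentence remark before the theorem, and your elaboration — including the caveat about upgrading the non-equivariant comparison to the equivariant categories and the caveat about Verdier duality in the non-holonomic $D$-module setting — is a faithful and correct unpacking of the same argument.
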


\section{Parabolic restriction and parabolic induction are $t$-exact}

In this section, we will prove the main result of this note, namely that parabolic restriction and parabolic induction of adjoint-equivariant sheaves are $t$-exact. We recall that we fix a parabolic $L \xleftarrow{\pi} P \to G$.

\subsection{Left $t$-exactness of parabolic restriction} In this subsection, we establish the following proposition, on the way to proving the main theorem:

\begin{proposition}\label{prop left t-exactness}
	The functor $pres^G_P$ is left $t$-exact.
\end{proposition}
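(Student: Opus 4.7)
My plan is to reduce, via Braden's theorem and Verdier duality, to the equivalent statement of right $t$-exactness of $pres^G_{P^-}$ (and more generally, of $pres^G_Q$ for any parabolic $Q$), and then to bound support dimensions of cohomology sheaves using the dimension estimates of Section~\ref{sec conj cl}. Concretely, Remark~\ref{remark non-equi description} identifies the underlying sheaf of $pres^G_P$ with $\pi_* \iota^!$; Proposition~\ref{prop Braden} yields the Braden isomorphism $\pi_* \iota^! \cong (\pi^-)_! (\iota^-)^*$ on $G$-monodromic sheaves; and Verdier duality then intertwines $pres^G_P$ with $pres^G_{P^-}$. Consequently, left $t$-exactness of $pres^G_P$ is equivalent to right $t$-exactness of $pres^G_{P^-}$, which since the opposite parabolic is arbitrary I state as right $t$-exactness of $pres^G_Q$ for any parabolic $Q$.

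For right $t$-exactness, fix $\calF' \in {}^pD^{\leq 0}(G \backslash G)$ and verify that each cohomology sheaf $\calH^{-i}(pres^G_Q(\calF'))$ has support of dimension at most $i$. Using the non-equivariant description combined with Braden, $pres^G_Q$ underlies $\pi_! \iota^*$ (for the appropriate opposite parabolic), so the stalk at $\ell \in L^{(e)}$ computes, by proper base change, as $R\Gamma_c(\ell U, \ul{\calF'}|_{\ell U})$. For each $j$ and $d$, let $V_{j,d} := \mathrm{supp}(\calH^{-j}(\ul{\calF'})) \cap G^{(d)}$, a $G$-invariant subvariety of dimension at most $j$ by the perversity of $\calF'$. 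The contribution of $V_{j,d}$ to the stalk at $\ell$ lives in cohomological degrees $\leq -j + 2\dim(V_{j,d} \cap \pi^{-1}(\ell))$. Lemma~\ref{lem est} furnishes the pointwise fiber estimate $\dim(\calO^G_g \cap \pi^{-1}(\ell)) \leq (d-e)/2$, whereas Proposition~\ref{prop main est} furnishes the complementary relative estimate $\dim(V_{j,d} \cap \pi^{-1}(\ell)) \leq \dim V_{j,d} - \dim(V_{j,d} \cap \pi^{-1}(L^{(e)}))$. Combining these—pointwise for the cohomological degree, relative for the support dimension in $L$—gives both bounds required for ${}^pD^{\leq 0}(L \backslash L)$.

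The main obstacle is the relative dimension estimate of Proposition~\ref{prop main est}: a simple torus example shows that the pointwise bound alone does not imply membership in ${}^pD^{\leq 0}$, since it controls only costalks at points in low-dimensional orbits but not the genuine support-dimension constraint that enters the perverse $t$-structure on $L$. The uniform estimate over the parameter family of $L$-orbits of fixed dimension is what closes this gap. Lemma~\ref{lem est}—itself derived from the classical Borel intersection formula (Proposition~\ref{prop est for borel})—supplies the pointwise ingredient, and together with Proposition~\ref{prop main est} makes the support-dimension bookkeeping work out.
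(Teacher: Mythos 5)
Your argument is a Verdier-dual variant of the paper's proof: the paper establishes left $t$-exactness directly, examining $\pi_* \iota^!$ and its $!$-restriction $i_e^!$ to each stratum $L^{(e)}$, whereas you pass to right $t$-exactness and track stalks of $\pi_! \iota^*$. The essential geometric input (Proposition \ref{prop main est}) is the same, and your observation that the pointwise bound of Lemma \ref{lem est} is not by itself enough to control support dimensions is precisely the reason Proposition \ref{prop main est} exists. The paper's route is somewhat leaner because it first runs a Noetherian induction on $\mathrm{supp}(\calM)$ to reduce to $\calM = \nu_* \calS$ with $\calS$ smooth on a single $G$-invariant stratum $V \subset G^{(d)}$, after which the cohomological bookkeeping collapses to Lemmas \ref{lem cohom amp 1} and \ref{lem cohom amp 2}; your approach bins a general $\calF'$ over all cohomology degrees $j$ and all orbit dimensions $d$ simultaneously.

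There are two genuine gaps. First, the reduction from left $t$-exactness of $pres^G_P$ to right $t$-exactness of $pres^G_{P^-}$ via Braden plus Verdier duality is not available in sheaf-theoretic context (1) of \S\ref{sec notations sheaf} (non-holonomic $D$-modules), where $\bbD$ is not an anti-equivalence. The paper's direct argument is deliberately set up to run uniformly in all four contexts; the remark after Theorem \ref{thm t-exactness} notes explicitly that the duality shortcut only applies in contexts (2)--(4).

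Second, the combination of estimates you describe does not close the argument as stated. Writing $W_{j,d} := V_{j,d} \cap \pi^{-1}(L^{(e)})$: if one uses Lemma \ref{lem est} to bound the stalk degree (contribution from $V_{j,d}$ in degrees $\leq -j + (d-e)$) and then bounds the support simply by $\pi(W_{j,d})$, one only gets $\dim \pi(W_{j,d}) \leq \dim W_{j,d} \leq \dim V_{j,d} \leq j$, which exceeds $k$ whenever $j > k$. What actually closes the argument is to feed Proposition \ref{prop main est} into the stalk-degree inequality: if $\calH^{-k}$ of the restriction is nonzero at $\ell \in L^{(e)}$, then $\dim(V_{j,d} \cap \pi^{-1}(\ell)) \geq (j-k)/2$ for some $(j,d)$, and Proposition \ref{prop main est} then forces $\dim W_{j,d} \leq \dim V_{j,d} - (j-k)/2 \leq (j+k)/2$; finally, upper semicontinuity of fiber dimension shows the locus of such $\ell$ has dimension at most $\dim W_{j,d} - (j-k)/2 \leq k$. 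The semicontinuity step is indispensable and absent from your account, and Lemma \ref{lem est} plays no direct role in the final estimate (in the paper it appears only as an ingredient in the proof of Proposition \ref{prop main est}).
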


\begin{proof}
	It is enough to establish that for any $0 \neq \calM \in D(G \backslash G)^{=0}$ one has $pres^G_P \calM \in D()^{\ge 0}$, and in the $D$-module setting one can assume that $\calM$ is coherent. We can find a non-empty open $\nu : V \hookrightarrow \text{supp} (\calM)$  which is connected smooth and $G$-invariant, and such that $\nu^! \calM$ is smooth (see \S\ref{sec notations sheaf} for what ``smooth" means). Moreover, by making $V$ smaller we can assume that $V \subset G^{(d)}$ for some $d \ge 0$.
	
	By Noetherian induction on the support of $\calM$ we reduce to the case $\calM = \nu_* \calS$, where $\nu : V \hookrightarrow G$ is as above and $\calS \in D(G \backslash V)^{=0}$ is smooth.
	
	\medskip
	
	By remark \ref{remark non-equi description} it suffices to show that $$ \pi_* \iota^! \ul{\calS} \in D()^{\ge 0}.$$ Denoting by $i_e : L^{(e)} \hookrightarrow L$ the inclusion, it is enough to show that $$ i_e^! \pi_* \iota^! \ul{\calS} \in D()^{\ge 0}$$ for all $e \ge 0$. By base change, this is the same as to establish $$ \wt{\pi}_* \wt{i}_e^! \ul{\calS} \in D()^{\ge 0},$$ where $$ \xymatrix{ & V \cap \pi^{-1} (L^{(e)}) \ar[ld]_{\wt{i}_e} \ar[rd]^{\wt{\pi}} & \\ V & & L^{(e)}}.$$ Now, proposition \ref{prop main est} states that the dimensions of fibers of $\wt{\pi}$ are no bigger than the difference in dimension between the target and source of $\wt{i}_e$. Hence, using lemmas \ref{lem cohom amp 1} and \ref{lem cohom amp 2}, the claim follows.
\end{proof}

We record here the following two lemmas which we have used in the proof above:

\begin{lemma}\label{lem cohom amp 1}
	Let $\pi : X \to Y$ be a morphism of varieties, with $Y$ connected smooth. Denote $\dim X \leq e$ and $\dim Y = d$. Let $\calM \in D(Y)^{=0}$  be smooth. Then $\pi^! (\calM) \in D(X)^{\ge d-e}$.
\end{lemma}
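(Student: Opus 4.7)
My plan is to reduce the claim to a local computation on $Y$, and then identify $\pi^!\calM$ with a shift of the dualizing complex of $X$. Since the perverse $t$-structure is local on $X$ and $\pi^!$ commutes with restriction to preimages of opens in $Y$, I may cover $Y$ by opens on which $\calM$ becomes trivial; so it suffices to treat the case in which $\calM$ is the constant sheaf $\bar{\bbQ}_{\ell}$ on $Y$ shifted by $[d]$ (or the appropriate analogous trivial smooth perverse object in each of the sheaf-theoretic settings of \S\ref{sec notations sheaf}).

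For this case, smoothness of $Y$ yields $\bar{\bbQ}_{\ell} \cong \omega_Y[-2d](-d)$ on $Y$, hence
\[ \pi^!\calM \;\cong\; \pi^!\omega_Y[-d](-d) \;\cong\; \omega_X[-d](-d). \]
The lemma thus reduces to the assertion that $\omega_X \in {}^pD(X)^{\ge -e}$ whenever $\dim X \le e$. This last assertion follows from Verdier duality together with the defining support-dimension inequality of the perverse $t$-structure: the constant sheaf $\bar{\bbQ}_\ell$ on $X$ is a single sheaf in cohomological degree $0$ whose support has dimension $\le e$, so it lies in ${}^pD(X)^{\le e}$, and dualizing places $\omega_X$ in ${}^pD(X)^{\ge -e}$; shifting by $[-d]$ gives the desired bound $\pi^!\calM \in {}^pD(X)^{\ge d-e}$.

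The main obstacle I anticipate is checking that each step remains valid uniformly across all four sheaf-theoretic settings of \S\ref{sec notations sheaf} --- in particular, in the non-holonomic $D$-module setting, where smooth sheaves may have infinite rank and Verdier duality is not straightforwardly available. In that setting one needs to verify the identification $\pi^!\bar{\bbQ}_\ell \cong \omega_X[-2d](-d)$ and the amplitude bound on $\omega_X$ by more direct means (e.g.\ a stratification argument, or by reducing to a local trivialization in which $\calM$ is a possibly infinite direct sum of constant objects on a small affine open of $Y$), but the same conclusion is reached.
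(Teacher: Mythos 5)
The paper records this lemma without giving a proof, so I can only assess your argument on its own terms. In the $\ell$-adic, complex-analytic and holonomic $D$-module settings your duality argument works, with one small slip: a lisse sheaf trivializes only after passing to an \emph{\'etale} cover of $Y$ (resp.\ an analytic cover), not a Zariski open cover; this is harmless since the perverse $t$-structure can be tested after such covers, but you should say so. In fact the localization can be avoided entirely: for smooth $\calM \in D(Y)^{=0}$ with $Y$ connected smooth of dimension $d$, $\bbD_Y\calM$ is again a smooth object of $D(Y)^{=0}$; the pullback $\pi^{*}\bbD_Y\calM$ is concentrated in ordinary cohomological degree $-d$ with support of dimension $\le e$, hence lies in $D(X)^{\le e-d}$ by the support condition; applying $\bbD_X$ gives $\pi^{!}\calM = \bbD_X\pi^{*}\bbD_Y\calM \in D(X)^{\ge d-e}$.

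The genuine gap is in the non-holonomic $D$-module setting, and the fallback you sketch does not close it. There \emph{smooth} means a coherent $D$-module that is $\calO$-locally free, i.e.\ a flat connection, and flat connections are not Zariski-locally (nor \'etale-locally) trivial as $D$-modules; so there is no cover of $Y$ on which $\calM$ becomes a direct sum of copies of $\calO_Y$, and the reduction to the constant object fails. Instead one should argue directly, with no duality and no trivialization: for $X$ smooth of dimension $e' \le e$, one has $\pi^{!}\calM \cong L\pi^{*}\calM[e'-d]$ as a complex of $D_X$-modules, and since $\calM$ is $\calO_Y$-locally free, $L\pi^{*}\calM = \pi^{*}\calM$ is an $\calO_X$-locally free $D_X$-module in degree $0$; hence $\pi^{!}\calM$ is concentrated in degree $d-e' \ge d-e$. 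For singular $X$ one factors $\pi$ through a closed embedding of $X$ into a smooth variety and uses Kashiwara's equivalence to reduce to the smooth case.
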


\begin{lemma}\label{lem cohom amp 2}
	Let $\pi : X \to Y$ be a morphism of varieties, with dimensions of fibers $\leq f$. Then Then $\pi_* (D(X)^{\ge 0}) \subset D(Y)^{\ge -f}$.
\end{lemma}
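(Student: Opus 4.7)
Plan: This lemma is the standard left cohomological amplitude bound for $\pi_*$ with respect to the perverse $t$-structure, namely that $\pi_*$ has left shift at most $f$ when all fibers of $\pi$ have dimension $\leq f$. The natural strategy is to deduce it, via Verdier duality, from the dual statement
\[ \pi_!\bigl(D(X)^{\leq 0}\bigr) \subset D(Y)^{\leq f}, \]
where throughout $D^{\leq 0}$ and $D^{\geq 0}$ refer to the perverse $t$-structure. Verdier duality exchanges $\pi_*$ with $\pi_!$ and swaps the perverse truncations, giving the equivalence in the three settings (2)--(4); in the non-holonomic $D$-module setting (1), where unbounded Verdier duality is delicate, one argues directly with the same dimension count below or reduces to bounded coherent objects.

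For the dual amplitude bound, I would use the support-dimension characterization of the perverse $t$-structure: $\mathcal{H} \in D(Z)^{\leq 0}$ iff $\dim \text{supp}\, \mathcal{H}^i(\mathcal{H}) \leq -i$ for every $i$, where $\mathcal{H}^i$ denotes the classical (non-perverse) cohomology sheaves. Given $\mathcal{F} \in D(X)^{\leq 0}$, the goal is the analogous bound $\dim \text{supp}\, \mathcal{H}^j(\pi_! \mathcal{F}) \leq f - j$ for every $j$. The key inputs are the stalk formula $(\pi_! \mathcal{F})_y \cong R\Gamma_c(X_y, \mathcal{F}|_{X_y})$, the hypercohomology spectral sequence
\[ E_2^{p,q} = H^p_c\bigl(X_y, \mathcal{H}^q(\mathcal{F})|_{X_y}\bigr) \Rightarrow \mathcal{H}^{p+q}\bigl((\pi_!\mathcal{F})_y\bigr), \]
and Artin's vanishing $H^p_c(Z,-) = 0$ for $p > 2 \dim Z$.

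The main obstacle is the precise dimension count: bounding $\dim \{y \in Y : \mathcal{H}^j((\pi_!\mathcal{F})_y) \neq 0\}$ by $f - j$ requires stratifying $Y$ so that the relevant $\mathcal{H}^q(\mathcal{F})$ become constructible along the strata and the fiber intersections $X_y \cap \text{supp}\, \mathcal{H}^q(\mathcal{F})$ vary uniformly. One then combines $\dim X_y \leq f$, the hypothesis $\dim \text{supp}\, \mathcal{H}^q(\mathcal{F}) \leq -q$, and the Artin-vanishing constraint $p \leq 2 \dim(X_y \cap \text{supp}\, \mathcal{H}^q(\mathcal{F}))$, and maximizes over pairs $(p,q)$ with $p+q = j$ to obtain the desired bound. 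This is essentially the argument of BBD (Théorème 4.2.4) transported uniformly to each of the four sheaf-theoretic contexts of \S\ref{sec notations sheaf}.
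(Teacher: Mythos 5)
The paper records this lemma without proof, treating it as standard (it is essentially \cite[BBD Th\'eor\`eme 4.2.4]{GaLo} -- or rather, that result in Beilinson--Bernstein--Deligne, which the paper is implicitly invoking). Your sketch is the standard argument: dualize to the $\pi_!$-bound $\pi_!\bigl(D^{\leq 0}\bigr)\subset D^{\leq f}$, then use the support characterization of ${}^pD^{\leq 0}$, the stalk formula for $\pi_!$, the hypercohomology spectral sequence, and the cohomological-dimension bound $H^p_c(Z,-)=0$ for $p>2\dim Z$. The dimension count you gesture at does close: if $\mathcal H^{j}\bigl((\pi_!\mathcal F)_y\bigr)\neq 0$ then for some $(p,q)$ with $p+q=j$ one has $p\leq 2\dim\bigl(X_y\cap\operatorname{supp}\mathcal H^q(\mathcal F)\bigr)$, and combining $\dim\operatorname{supp}\mathcal H^q(\mathcal F)\leq -q$, $\dim X_y\leq f$, and upper semicontinuity of fiber dimension of $\pi|_{\operatorname{supp}\mathcal H^q(\mathcal F)}$ bounds the locus of such $y$ by $f-j$; no auxiliary stratification of $Y$ is actually needed. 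Two small quibbles. First, the vanishing $H^p_c(Z,-)=0$ for $p>2\dim Z$ is the general cohomological dimension bound, not ``Artin vanishing'' (the latter is the affine statement $H^p(Z,-)=0$ for $p>\dim Z$). Second, your treatment of the non-holonomic $D$-module setting (context (1)) is a hand-wave: Verdier duality on unbounded $D$-module categories is not available, and the stalk-with-compact-supports computation has no direct $D$-module analogue, so ``the same dimension count below'' does not literally apply; one instead needs a direct argument for $\int_\pi$ (for instance by Noetherian induction on $Y$ together with the amplitude of the relative de Rham complex on the support), which is the one genuinely non-routine point and deserves to be spelled out rather than deferred.
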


\subsection{$t$-exactness}

We can now prove the main theorem of this note:

\begin{theorem}\label{thm t-exactness}
	The functors $pres^G_P$ and $pind^P_G$ are $t$-exact.
\end{theorem}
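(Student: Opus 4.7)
The plan is to deduce Theorem \ref{thm t-exactness} from Proposition \ref{prop left t-exactness} by a purely formal argument combining Braden's theorem with the two adjunctions. The substantive analytic/dimensional work has already been carried out in Proposition \ref{prop left t-exactness} (and ultimately in Proposition \ref{prop main est}); what remains is bookkeeping with dualities and adjoints.

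First I would handle the restriction functor. Applying Proposition \ref{prop left t-exactness} with $P$ replaced by the opposite parabolic $P^-$ shows that $pres^G_{P^-}$ is left $t$-exact. As noted just after Proposition \ref{prop Braden}, the functors $pres^G_P$ and $pres^G_{P^-}$ are Verdier dual to each other. Since Verdier duality exchanges $D^{\le 0}$ and $D^{\ge 0}$, left $t$-exactness of $pres^G_{P^-}$ is equivalent to right $t$-exactness of $pres^G_P$. Combined with the left $t$-exactness provided directly by Proposition \ref{prop left t-exactness}, this gives that $pres^G_P$ is $t$-exact, and symmetrically so is $pres^G_{P^-}$.

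For parabolic induction I would invoke both adjunctions together with the standard fact that in an adjoint pair $F \dashv G$ the functor $F$ is right $t$-exact if and only if $G$ is left $t$-exact. The original adjunction $pind^P_G \dashv pres^G_P$, combined with left $t$-exactness of $pres^G_P$, yields that $pind^P_G$ is right $t$-exact. The second adjunction $pres^G_{P^-} \dashv pind^P_G$ of Theorem \ref{thm second adj}, combined with right $t$-exactness of $pres^G_{P^-}$ established above, yields that $pind^P_G$ is left $t$-exact. Hence $pind^P_G$ is $t$-exact. (Alternatively, since $pind^P_G$ is Verdier self-dual, right $t$-exactness alone already implies left $t$-exactness.)

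The only genuinely nontrivial input is Proposition \ref{prop left t-exactness}, which in turn rested on the dimension estimate of Proposition \ref{prop main est}; no further obstacle arises in assembling Theorem \ref{thm t-exactness}.
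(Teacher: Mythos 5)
Your argument is logically correct and essentially optimal in the sheaf-theoretic contexts (2), (3), (4), where Verdier duality is an anti-autoequivalence exchanging $D^{\leq 0}$ and $D^{\geq 0}$. In fact the paper explicitly acknowledges your shortcut for $pres^G_P$ as a remark after the proof: one can go directly from left $t$-exactness of $pres^G_{P^-}$ to right $t$-exactness of $pres^G_P$ via the duality $\bbD \circ pres^G_{P^-} \cong pres^G_P \circ \bbD$ coming from Proposition \ref{prop Braden}, bypassing the detour through $pind$.

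However, there is a genuine gap in the non-holonomic $D$-module setting, context (1). There $\bbD$ is not a $t$-exact-up-to-shift anti-autoequivalence of the whole category $D(\calX)$ (it is only well-behaved after restricting to holonomic objects), so the implications ``$pres^G_{P^-}$ left $t$-exact $\Rightarrow$ $pres^G_P$ right $t$-exact'' and ``$pind^P_G$ right $t$-exact and Verdier self-dual $\Rightarrow$ $pind^P_G$ left $t$-exact'' are not immediate. This is precisely where the paper does extra work: after establishing right $t$-exactness of $pind^P_G$ by adjunction, it proves left $t$-exactness of $pind^P_G$ in context (1) by reduction to the holonomic case, using Raskin's results that the standard functors do not increase the holonomic defect together with a comparison between the $t$-structure and $\bbD$ controlled by the holonomic defect. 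Only then, via the second adjunction $pres^G_{P^-} \dashv pind^P_G$, does it deduce right $t$-exactness of $pres^G_P$. So the assembly order matters in context (1): one must route through $pind^P_G$ because that is the functor for which the extra holonomic-defect argument is available. You should either restrict your argument to contexts (2)--(4) and supply the Raskin-based argument for context (1), or reorganize to match the paper's order.
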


\begin{proof}
	Proposition \ref{prop left t-exactness} states that $pres^G_P$ is left $t$-exact. Thus, $pind^P_G$ is right $t$-exact. Since $pind^P_G$ is Verdier self-dual we deduce that $pind^P_G$ is also left $t$-exact - this is clear in the sheaf-theoretic contexts $(2),(3),(4)$, and will be explained in a moment in the sheaf-theoretic context $(1)$. Finally, by theorem \ref{thm second adj}, we deduce, since we just showed that $pind^{P^-}_G$ is left $t$-exact, that the functor $pres^G_P$ is right $t$-exact.
	
	\medskip
	
	Let us now explain, in the sheaf-theoretic context $(1)$, why the right $t$-exactness of $pind^P_G$ implies its left $t$-exactness. It is enough to show that for $\calM \in D(L \backslash L)^{ = 0}$ which is coherent, one has $pind^P_G \calM \in D()^{\ge 0}$. Since $pind^P_G \cong \bbD \circ pind^P_G \circ \bbD$, it is enough to show that for every $i \ge 0$ one has $\bbD pind^P_G (H^{-i} (\bbD \calM)[i]) \in D()^{\ge 0}$. We will use \cite{Ra} as a reference. The holonomic defect of $H^{-i} (\bbD \calM)$ is $\leq i$. Since, by \cite[Theorem 2.5.1]{Ra}, the standard functors do not increase holonomic defect, the holonomic defect of $pind^P_G (H^{-i} (\bbD \calM) [i]) $ is $\leq i$. Hence, by \cite[Proposition 2.6.1]{Ra} and by the established right $t$-exactness of $pind^P_G$, we have $\bbD pind^P_G (H^{-i} (\bbD \calM)[i]) \in D()^{\ge 0}$.
\end{proof}

\begin{remark}
	In the sheaf-theoretic contexts $(2),(3),(4)$ we can avoid using $pind^P_G$ in order to establish the right $t$-exactness of $pres^G_P$ - it simply follows from the left $t$-exactness by Verdier duality, using proposition \ref{prop Braden}.
\end{remark}

\subsection{Purity and semisimplicity}

Let us notice in passing here that, when we consider the setting of a finite ground field and mixed $\ell$-adic sheaves, the functor $pres^G_P$ preserves complexes of weight $\ge w$ (as is clear from remark \ref{remark non-equi description}) and complexes of weight $\leq w$ (as is clear from proposition \ref{prop Braden}). Thus, $pres^G_P$ is pure (preserves complexes pure of weight $w$). This allows, in a standard way, to deduce that, now in one of our sheaf-theoretic contexts $(3),(4)$, the functor $pres^G_P$ preserves semisimplicity of complexes ``of geometric origin". Let us here also notice that $pind^P_G$ preserves semisimplicity of complexes of geometric origin - this follows from the decomposition theorem.

\subsection{The Lie algebra case}\label{sec Lie alg}

In the case of $G$-equivariant sheaves on the Lie algebra $\frakg$, one has analogous results.

\medskip

First of all, Lie algebra versions of all the propositions in section \ref{sec conj cl} hold. These are stated analogously and proved in the same way, once one has the basic input, which is proposition \ref{prop est for borel}. Thus, we want to see that for $x \in \frakg$ and a Borel $\frakb \subset \frakg$, one has $$ \dim (\calO^{\frakg}_x \cap \frakb) = \frac{1}{2} \dim \calO^{\frakg}_{x}$$ (where $\calO^{\frakg}_x := \{  \textnormal{Ad} (g) x \ : \ g\in G \} $). By arguing in the same manner as in \cite[\S 6.17]{Hu}, we reduce to the case when $x$ is nilpotent. This latter case is handled, for example, in \cite[Corollary 3.3.24]{ChGi} (for attributions, consult \emph{loc. cit.}).

\medskip

Then, everything is defined and proven similarly to the group case. For example, parabolic restriction $pres^G_P$ is given as $q_* p^!$ where \begin{equation*}
\xymatrix{ & P \backslash \frakp \ar[dl]_{p} \ar[dr]^{q} & \\G \backslash \frakg & & L \backslash \frakl}. \end{equation*} We obtain:

\begin{theorem}
	The functors $$ pres^G_P : D(G \backslash \frakg) \to D(L \backslash \frakl)$$ and $$ pind^P_G : D(L \backslash \frakl) \to D(G \backslash \frakg)$$ are $t$-exact.
\end{theorem}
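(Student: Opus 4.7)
The plan is to transport the proof of Theorem \ref{thm t-exactness} from the group setting to the Lie algebra setting, replacing $G, L, P, U$ by $\frakg, \frakl, \frakp, \fraku$ and conjugation by the adjoint action. Essentially every ingredient used in the group proof has an evident Lie algebra counterpart, and the author has already identified the single new input: the Lie algebra version of Proposition \ref{prop est for borel}, namely $\dim(\calO^{\frakg}_x \cap \frakb) = \tfrac{1}{2}\dim \calO^{\frakg}_x$ for any $x \in \frakg$ and Borel subalgebra $\frakb \subset \frakg$. Following \S 6.17 of \cite{Hu}, the Jordan decomposition $x = x_s + x_n$ reduces this equality to the case when $x$ is nilpotent: centralizers of $x$ and intersections with Borels containing $x_s$ are governed by the Levi subalgebra $\frakz_{\frakg}(x_s)$, inside which $x_n$ is nilpotent. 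The nilpotent case is the classical Springer/Steinberg formula obtained via the Springer resolution (e.g. Corollary 3.3.24 of \cite{ChGi}). Once this is in hand, the Lie algebra analogs of Lemma \ref{lem est} and Proposition \ref{prop main est} follow by the same incidence-variety counts, since only the adjoint action is used.

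Next, I would establish the Lie algebra version of Proposition \ref{prop left t-exactness}, namely that $pres^G_P : D(G \backslash \frakg) \to D(L \backslash \frakl)$ is left $t$-exact. The non-equivariant description of Remark \ref{remark non-equi description} becomes $\pi_* \iota^!$ for the correspondence $\frakg \xleftarrow{\iota} \frakp \xrightarrow{\pi} \frakl$. By Noetherian induction on the support one reduces to $\calM = \nu_* \calS$ with $\calS$ smooth on a connected smooth $G$-invariant open $V \subset \frakg^{(d)}$; stratifying the target by $\frakl^{(e)}$ and applying base change, one must show $\wt{\pi}_* \wt{i}_e^! \ul{\calS} \in D()^{\ge 0}$ for the obvious pullback diagram. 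The Lie algebra version of Proposition \ref{prop main est} supplies exactly the fiber-dimension bound that Lemmas \ref{lem cohom amp 1} and \ref{lem cohom amp 2} need, and the argument closes.

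I would then record the Lie algebra version of Braden's theorem: for opposite parabolic subalgebras $\frakp, \frakp^- \subset \frakg$, there is an isomorphism $\pi_! \iota^* \cong \pi^-_* (\iota^-)^!$ on $G$-monodromic sheaves on $\frakg$. This is hyperbolic localization applied to the $\bbG_m$-action on $\frakg$ through a regular cocharacter of the center of $L$, exactly as in the group case. It yields Verdier duality between $pres^G_P$ and $pres^G_{P^-}$ on $D(G \backslash \frakg)$, and hence the second adjunction: $pres^G_{P^-}$ is left adjoint to $pind^P_G$.

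With these ingredients the proof of Theorem \ref{thm t-exactness} now transports word for word. Left $t$-exactness of $pres^G_P$ together with adjunction gives right $t$-exactness of $pind^P_G$; since $pind^P_G = p_* q^!$ is still Verdier self-dual (smooth pull of relative dimension zero composed with a proper push), right $t$-exactness upgrades to $t$-exactness, with the holonomic-defect argument using \cite{Ra} handling the non-holonomic $D$-module setting unchanged. Second adjunction then delivers right $t$-exactness of $pres^G_P$. The only genuinely new step in this program is the adjoint-orbit-with-Borel dimension equality, and that reduces via Jordan decomposition to the Springer-resolution calculation in the nilpotent case; this is the main obstacle, but the literature handles it cleanly and the rest of the argument is a mechanical translation of the group proof.
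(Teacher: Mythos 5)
Your proposal is correct and takes essentially the same route as the paper: the paper likewise reduces everything to the Lie algebra analogue of Proposition \ref{prop est for borel}, argues the semisimple-plus-nilpotent reduction as in \cite[\S 6.17]{Hu}, cites \cite[Corollary 3.3.24]{ChGi} for the nilpotent case, and then states that all remaining ingredients (the incidence-variety dimension counts, Braden/second adjunction, and the $t$-exactness argument including the holonomic-defect step from \cite{Ra}) transport verbatim from the group setting. Nothing is missing.
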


Let us notice that in the sheaf-theoretic context of $D$-modules, this theorem was proven, by different methods, in \cite{Gu}.

\section{A conjecture about $t$-exactness related to the Harish-Chandra transform}\label{sec conj}

In this section we provide a conjecture which generalizes theorem \ref{thm t-exactness} in the Borel case. We exclude the non-holonomic $D$-module setting for simplicity. We fix a pair of opposite Borel subgroups $B , B^- \subset G$. We denote by $U , U^-$ their respective unipotent radicals and $T := B \cap B^-$, seen also as the Levi quotient of $B$ and of $B^-$.

\subsection{The Harish-Chandra transform}\label{sec HC transform}

Recall the \emph{Harish-Chandra transform} $$\textnormal{HC}_* : D(G \backslash G) \to D(G \backslash (G/U \times G/U) / T),$$ given by $q_* p^!$ where $$ \xymatrix{ & G \backslash (G \times G/B) \ar[rd]^q \ar[ld]_p & \\ G \backslash G & & G \backslash (G / U \times G/U) / T}.$$ Here $p(g_1 , g_2 B) = g_1$ and $q(g_1 , g_2 B) = (g_2 U , g_1 g_2 U)$.

\medskip

One has a closed embedding $$ i : B \backslash T \cong G \backslash (G / U \times G/U)_{1} / T \to G \backslash (G / U \times G/U) / T$$ (here $B$ acts on $T$ by projecting onto $T$ and then acting by conjugation) where $$ (G / U \times G/U)_1 := \{ (g_1 U , g_2 U) \in G / U \times G/U \ | \ g_1^{-1} g_2 \in B \}$$ and the identification is by $t \mapsto (U , tU)$. Let us also denote by $$ \pi : B \backslash T \to T \backslash T$$ the natural map.

\medskip

Similarly to remark \ref{remark non-equi description 2}, we have \begin{equation}
\label{eq conj 1} \pi_* \circ i^! \circ \textnormal{HC}_* \cong pres^G_B : \ D(G \backslash G) \to D(T \backslash T). \end{equation}

\subsection{The long intertwining transform}

Recall the \emph{long intertwining transform} $$R_! : D(G \backslash (G/U \times G/U) / T) \to D(G \backslash (G/U \times G/U^-) / T),$$ given by $s_! r^*$ where $$ \xymatrix{ & G \backslash (G/U \times G/U \times G/U^-)_{w_0 , \flat} / T \ar[rd]^s \ar[ld]_r & \\ G \backslash (G / U \times G/U) / T & & G \backslash (G / U \times G/U^-) / T }.$$ Here $$ (G / U \times G/U \times G/U^-)_{w_0 , \flat} := \{ (g_1 U , g_2 U , g_3 U^-) \in G/U \times G/U \times G/U^- \ | \ g_2^{-1} g_3 \in U U^-\}.$$ The map $r$ is by projecting to the first and second coordinates, while the map $s$ is by projecting to the first and third coordinates.

\medskip

One has an open embedding $$ j : T \backslash T \cong G \backslash (G/U \times G/U^-)_{w_0} / T \to G \backslash (G/U \times G/U^-) / T$$ where $$ (G/U \times G/U^-)_{w_0} :=  \{ (g_1 U, g_2 U^-) \in G/U \times G/U^- \  | \ g_1^{-1} g_2 \in BU^-\}$$ and the identification is by $t \mapsto (U , tU^-)$.

\begin{lemma}\label{lem open closed}
	One has $$ \pi_* \circ i^! \cong j^! \circ R_! : \ D(G \backslash (G/U \times G/U) / T) \to D(T \backslash T).$$
\end{lemma}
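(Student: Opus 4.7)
The approach is to apply base change for the open embedding $j$, analyze the resulting correspondence via the Bruhat decomposition, and invoke Braden's hyperbolic-localization theorem.

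First, form the cartesian square
$$\xymatrix{Z' \ar[r]^{j'} \ar[d]_{s'} & Z \ar[d]^s \\ T\backslash T \ar[r]^j & G\backslash(G/U\times G/U^-)/T}$$
where $Z$ is the correspondence defining $R_!$ and $Z' := s^{-1}(j(T\backslash T))$. Since $j$ is an open embedding, $j^! = j^*$ commutes with $s_!$, giving $j^! R_! \cong s'_! \tilde r^*$, where $\tilde r := r \circ j' : Z' \to G\backslash(G/U\times G/U)/T$.

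Next, analyze $Z'$ via the Bruhat decomposition. Gauge-fixing $g_1 = 1$ via the diagonal $G$-action and then using the residual $U$-stabilizer to kill the $U$-component of $g_3 \in BU^- = UTU^-$, one identifies $Z' \cong [(T \times U^-)/T]$, where $T$ acts on $U^-$ by conjugation and trivially on $T$. Under this identification, $s'(t,n) = t$ and $\tilde r(t,n) = (U,tnU)$. The preimage $W := \tilde r^{-1}(i(B\backslash T))$ is cut out by the locus $n \in U^- \cap B = \{1\}$, so $W \cong T\backslash T$; in the resulting cartesian square over $i$, the induced map $\bar r : T\backslash T \to B\backslash T$ is the natural section of $\pi$ associated with the splitting $T \hookrightarrow B$, and $\pi \circ \bar r = s' \circ i' = \textnormal{id}_{T\backslash T}$.

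Finally, a regular cocharacter $\lambda : \bbG_m \to T$ in the anti-dominant chamber acts by conjugation on $U^-$ with contracting flow onto $\{1\}$. Passing to the $T$-torsor cover $\tilde Z' = T \times U^-$ of $Z'$, where this $\bbG_m$-action is genuine, and applying Braden's theorem \cite{Braden}, \cite{DrGa2} to the $\lambda$-equivariant sheaf $\tilde r^* \calF$ (equivariance inherited from the $T$-equivariant structure), one obtains upon $T$-equivariant descent
$$s'_! \tilde r^* \calF \cong (s' \circ i')_* (i')^! \tilde r^* \calF = (i')^! \tilde r^* \calF \cong \bar r^* i^! \calF,$$
where the second identification uses $s' \circ i' = \textnormal{id}$ and the last is base change in the cartesian square over $i$ (the cohomological shifts and Tate twists cancel because $i$ and $i'$ both have codimension $\dim U$). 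Since $\pi : B\backslash T \to T\backslash T$ has fibers $BU$ which are cohomologically trivial in all our sheaf-theoretic contexts, $\pi^*$ is an equivalence of derived categories with $\bar r^* = \pi_* = (\pi^*)^{-1}$; hence $j^! R_! \calF \cong \pi_* i^! \calF$, as desired. The principal obstacle is the Braden step: the contracting $\bbG_m$ lies inside the $T$ already quotiented in $Z'$, so one must either work on the $T$-torsor cover as above and descend by $T$-equivariance, or invoke the stacky formulation of Braden in \cite{DrGa2}.
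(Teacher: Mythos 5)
Your proof is correct and follows essentially the same route as the paper's: base-change along the open embedding $j$ to pass to $Z'=s^{-1}(j(T\backslash T))$, identify $Z'$ with the adjoint quotient $T\backslash U^-T$ (your model $(T\times U^-)/T$ differs only by the reparametrization $u^-\mapsto tnt^{-1}$), apply the contraction lemma for the $\bbG_m$ inside $T$ contracting $U^-$ to the identity, and finally use that $\pi\colon B\backslash T\to T\backslash T$ is a $\textnormal{pt}/U$-fibration so that $\pi_*$ inverts $\pi^*$. The paper presents the last two steps more compactly — it rewrites $j^!R_!$ as $\iota^!\widetilde{j}^!r^*\cong(r\circ\widetilde{j}\circ\iota)^![-2\dim U]$ and notes $r\circ\widetilde{j}\circ\iota=i\circ\rho$ with $\rho^!\cong\pi_*[2\dim U]$, whereas you separate out a cartesian square over $i$ and check the shift/twist bookkeeping explicitly — but the content is the same. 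Two small remarks: what you actually use is the contraction (or ``hyperbolic localization in the attracting direction'') lemma $p_!\cong i^!$ for monodromic sheaves, not Braden's full theorem comparing the two hyperbolic restrictions; and your passage to the $T$-torsor cover to handle the stacky $\bbG_m$-action is a legitimate workaround, though the stacky form of the contraction principle in \cite{DrGa2} applies directly since the adjoint $T$-action on $U^-T$ commutes with the conjugation action of $\lambda(\bbG_m)\subset T$.
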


\begin{proof}
	
	Consider the diagram:
	
	$$\xymatrix{ & & T \backslash U^- T \ar[ld]_{\wt{j}} \ar[rd]^{\wt{s}} & \\ & G \backslash (G/U \times G/U \times G/U^-)_{w_0} / T \ar[rd]^s \ar[ld]_r & & T \backslash T \ar[ld]_j \\ G \backslash (G / U \times G/U) / T & & G \backslash (G/U \times G/U^-) / T & }.$$ Here $T$ acts on $U^- T$ by conjugation. The map $\wt{j}$ is given by $u^- t \mapsto (U , u^- t U , t U^-)$ and the map $\wt{s}$ is given by $u^- t \mapsto t$. The diamond is Cartesian. Let us also denote by $$\iota : T \backslash T \to T \backslash U^- T$$ the inclusion. By the contraction principle, one has $\wt{s}_! \cong \iota^!$.
	
	\medskip

	One now has: $$ j^! \circ R_! = j^! \circ s_! \circ r^* \cong \wt{s}_! \circ \wt{j}^! \circ r^* \cong \iota^! \circ \wt{j}^! \circ r^* \cong (r \circ \wt{j} \circ \iota)^! [-2 \dim U] = \ldots$$ Notice now that $r \circ \wt{j} \circ \iota = i \circ \rho$ where $\rho : T \backslash T \to B \backslash T$ is the natural map. Also, one has $\rho^! \cong \pi_* [2\dim U]$ (because $\pi$ is a $U \backslash \bullet$-torsor). We thus further obtain $$ \cdots \cong \pi_* \circ i^!$$ as desired.
\end{proof}

\subsection{The conjecture} We propose the following conjecture:

\begin{conjecture}\label{conj conj}
	The functor $$ R_! \circ \textnormal{HC}_* : D(G \backslash G) \to D(G \backslash (G/U \times G/U^-) / T)$$ is $t$-exact.
\end{conjecture}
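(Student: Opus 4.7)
My plan is to exploit the Bruhat-type stratification of the $0$-dimensional target stack $G\backslash(G/U\times G/U^-)/T$ indexed by $W$, and establish $t$-exactness stratum by stratum. Let $\{j_w:Z_w\hookrightarrow G\backslash(G/U\times G/U^-)/T\}_{w\in W}$ denote the locally closed strata, indexed by the relative position of $g_1U$ and $g_2U^-$. The open stratum is precisely the $j:T\backslash T\hookrightarrow G\backslash(G/U\times G/U^-)/T$ from Lemma \ref{lem open closed}, and on it the conjecture already holds: Lemma \ref{lem open closed} and equation (\ref{eq conj 1}) identify $j^!\circ R_!\circ \textnormal{HC}_*$ with $pres^G_B$, which is $t$-exact by Theorem \ref{thm t-exactness}. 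It therefore suffices to verify the appropriate cohomological amplitudes of $j_w^!\circ R_!\circ \textnormal{HC}_*$ and $j_w^*\circ R_!\circ \textnormal{HC}_*$ for the remaining $w$.

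For the $!$-restriction I would generalize the proof of Lemma \ref{lem open closed}: fit $Z_w$ into a Cartesian square inside the combined correspondence for $R_!\circ \textnormal{HC}_*$, apply the contraction principle along the unipotent subgroup $U\cap wU^-w^{-1}$ (mirroring the step $\wt{s}_!\cong \iota^!$ used there), and match the resulting composition with parabolic restriction $pres^G_{B_w}$ for a Borel $B_w$ obtained from $B$ by a $w$-twist, up to a cohomological shift cancelling the dimension of the contracted unipotent and composition with a $t$-exact smooth pullback/pushforward. Left $t$-exactness on $Z_w$ then follows from Theorem \ref{thm t-exactness} applied to $B_w$. For right $t$-exactness on $Z_w$ (i.e.\ $j_w^*$-vanishing) I would invoke Verdier duality together with a Braden-type identification generalizing Proposition \ref{prop Braden}: swapping $U\leftrightarrow U^-$ throughout the long intertwining correspondence should render $R_!\circ \textnormal{HC}_*$ Verdier dual, up to shift, to the $B^-$-version $R_*^{\prime}\circ \textnormal{HC}_!^{\prime}$, and applying the stratum-by-stratum left $t$-exactness argument with $B$ replaced by $B^-$ then supplies left $t$-exactness of the dual.

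The principal obstacle is the $w$-twisted stratum identification in the preceding paragraph. For the open stratum the contraction of $U^-T$ down to $T$ in Lemma \ref{lem open closed} is transparent, but for a general $w$ one must contract along the mixed subgroup $U\cap wU^-w^{-1}$ in a geometry where $U$ and $wU^-w^{-1}$ are no longer in generic position, and then check cleanly that the output is literally a parabolic restriction functor for $G$ (and not some further twist that would disrupt $t$-exactness). Once this identification is in place, the residual dimension estimates are delivered by Proposition \ref{prop main est} applied to $B_w$, and the argument reduces ultimately, as in the proof of Proposition \ref{prop left t-exactness}, to Proposition \ref{prop est for borel}.
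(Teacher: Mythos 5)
The statement you are trying to prove is labelled a \emph{conjecture} in the paper, and the authors give no proof of it. The only thing they establish is the Proposition immediately following Conjecture~\ref{conj conj}: that $j^!\circ R_!\circ\textnormal{HC}_*$ is $t$-exact, i.e.\ the support/cosupport conditions hold on the \emph{open} Bruhat stratum $j:T\backslash T\hookrightarrow G\backslash(G/U\times G/U^-)/T$. Your treatment of the open stratum is exactly this, and is fine. Everything beyond it is new, and I do not think the key step is correct.

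Your plan hinges on the claim that $j_w^!\circ R_!\circ\textnormal{HC}_*$ is, up to a shift and a $t$-exact smooth correction, a parabolic restriction $pres^G_{B_w}$. This cannot be right, for a geometric reason: the deeper strata are simply not of the form $T\backslash T$, so there is no torus target for a would-be parabolic restriction. Concretely, the stabilizer in $G\times T$ of a point $(U,\dot w U^-)$ in the $w$-stratum of $G/U\times G/U^-$ is an extension of the subtorus $T^{w}$ (the $w$-fixed points of $T$) by the unipotent group $U\cap \dot w U^-\dot w^{-1}$; for $w\neq e$ the torus part $T^w$ is strictly smaller than $T$ (even finite for $w=w_0$ in types where $w_0=-1$). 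Thus $Z_w$ for $w\neq e$ is a unipotent gerbe over $B(T^w)$ (or a finite union of such), not a copy of $T\backslash T$. Already for $G=SL_2$ the unique closed stratum is $\bullet/(U\rtimes\mu_2)$, with no $T$ in sight. Relatedly, the contraction-principle step $\wt s_!\cong\iota^!$ in Lemma~\ref{lem open closed} is driven by a regular cocharacter of $T$ contracting $U^-$, and $T$ is the residual torus on the open stratum; on deeper strata the residual torus $T^w$ generally has no cocharacter contracting the relevant unipotent group, so the $!$-pushforward along the $U^-$-fibration over $Z_w$ is a genuine (non-$t$-exact) operation rather than a shifted $!$-restriction. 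Your Verdier-duality reduction of the $*$-restriction conditions to a $B^-$-version of the same argument also relies on an identification $\bbD\circ R_!\circ\textnormal{HC}_*\circ\bbD\cong R'_*\circ\textnormal{HC}'_!$ that the paper's Proposition~\ref{prop Braden} does not deliver: Braden's theorem there concerns the single pair $(U,U^-)$ acting on $G$, and upgrading it to the whole horocycle correspondence is nontrivial.

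Finally, a sanity check: if the restriction of $R_!\circ\textnormal{HC}_*$ to each Bruhat stratum were a twisted parabolic restriction as you propose, Conjecture~\ref{conj conj} would follow in a few lines from Theorem~\ref{thm t-exactness}, and the authors (who prove precisely the open-stratum case as their evidence) would certainly have noticed. The difficulty the conjecture isolates is exactly what happens on the non-open strata, and your sketch does not yet address it; it replaces it with an identification that does not hold.
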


\begin{remark}
	According to \cite[Corollary 3.4]{BeFiOs}, in the setting of holonomic $D$-modules, the conjecture holds when we restrict the domain to that of \emph{character sheaves}. In the $\ell$-adic setting, the same assertion can be deduced from \cite[Theorem 7.8]{ChYo}.
\end{remark}

The following proposition is an evidence toward conjecture \ref{conj conj}.

\begin{proposition}
	The functor $$ j^! \circ R_! \circ \textnormal{HC}_* :\  D(G \backslash G) \to D(T \backslash T)$$ is $t$-exact.
\end{proposition}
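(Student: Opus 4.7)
The plan is essentially to observe that the composition $j^! \circ R_! \circ \textnormal{HC}_*$ coincides, up to canonical isomorphism, with the parabolic restriction functor $pres^G_B$, and then invoke the main theorem of the note.

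More precisely, I would proceed in three short steps. First, apply Lemma \ref{lem open closed} to rewrite
\[ j^! \circ R_! \circ \textnormal{HC}_* \;\cong\; \pi_* \circ i^! \circ \textnormal{HC}_*. \]
Second, invoke the identification \eqref{eq conj 1} established in \S\ref{sec HC transform}, which gives
\[ \pi_* \circ i^! \circ \textnormal{HC}_* \;\cong\; pres^G_B : D(G\backslash G) \to D(T \backslash T). \]
Third, apply Theorem \ref{thm t-exactness} in the case $P = B$, $L = T$, which asserts that $pres^G_B$ is $t$-exact. Combining these three isomorphisms/identifications yields the desired $t$-exactness.

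Since every ingredient is already in place earlier in the paper, there is no genuine obstacle here; the content of the proposition is precisely that the main theorem of the note, specialized to the Borel case, can be reformulated in terms of the composition $R_! \circ \textnormal{HC}_*$ followed by open restriction to the ``big cell'' $T \backslash T$. This is exactly why the proposition is advertised as evidence toward Conjecture \ref{conj conj}: the full conjecture asserts $t$-exactness before restricting along $j^!$, whereas what we prove is only its ``generic'' shadow obtained by pulling back to the open stratum.
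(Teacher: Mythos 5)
Your proof is exactly the paper's argument: the paper's own one-line proof reads ``This follows by combining lemma \ref{lem open closed}, relation \ref{eq conj 1}, and the theorem \ref{thm t-exactness},'' which is precisely the three-step chain you spell out. No differences to report.
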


\begin{proof}
	This follows by combining lemma \ref{lem open closed}, relation \ref{eq conj 1}, and the theorem \ref{thm t-exactness}.
\end{proof}

\section{Two applications}\label{sec app}

\subsection{Application to Euler characteristic}

In this subsection, we assume that we are in the holonomic $D$-module, $\ell$-adic or complex-analytic settings (i.e., we exclude non-holonomic $D$-modules).

\medskip

Recall that for a variety $X$ and $\calG \in D(X)$, one defines the \emph{Euler characteristic} $\text{Eul} (\calG) \in \bbZ$ by: $$ \text{Eul} (\calG) := \dim p_* \calG = \sum_{i \in \bbZ} (-1)^i \dim H^i (p_* \calG)$$ where $p : X \to \bullet$.

\medskip

We would like to prove:

\begin{theorem}\label{thm euler}
	Let $\calF \in D(G \backslash G)^{= 0}$. Then $ \text{Eul} (\ul{\calF}) $ is non-negative.
\end{theorem}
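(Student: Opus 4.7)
The plan is to reduce the statement to the case of a torus. Let $B \subset G$ be a Borel subgroup with unipotent radical $U$ and Levi quotient $T = B/U$, a maximal torus. By Theorem \ref{thm t-exactness}, $pres^G_B \calF$ is perverse on the stack $T \backslash T$, and hence $\ul{pres^G_B \calF}$ is a perverse sheaf on $T$. The theorem of Gabber--Loeser (in the $\ell$-adic and complex-analytic settings) and that of Franecki--Kapranov (in the holonomic $D$-module setting) guarantee that the Euler characteristic of any perverse sheaf on a torus is non-negative. Thus the theorem reduces to establishing the identity
\[
\text{Eul}(\ul{\calF}) = \text{Eul}(\ul{pres^G_B \calF}).
\]

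Using the non-equivariant description from Remark \ref{remark non-equi description}, we have $\ul{pres^G_B \calF} \cong \pi_* \iota^! \ul{\calF}$, where $\iota \colon B \hookrightarrow G$ is the closed inclusion and $\pi \colon B \twoheadrightarrow T$ is the quotient. Since pushforward to a point preserves Euler characteristic, $\text{Eul}(\ul{pres^G_B \calF}) = \chi(B, \iota^! \ul{\calF})$. Moreover, $\iota$ is a regular embedding of smooth varieties of codimension $\dim U$, so $\iota^! \cong \iota^*[-2 \dim U](-\dim U)$, which gives $\chi(B, \iota^! \ul{\calF}) = \chi(B, \iota^* \ul{\calF})$. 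It therefore remains to prove $\chi(G, \ul{\calF}) = \chi(B, \iota^* \ul{\calF})$.

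The main obstacle is this last identity, and I would overcome it by the classical torus-localization theorem for Euler characteristics: if a torus $S$ acts on a variety $X$ and $\calG$ is an $S$-equivariant constructible complex on $X$, then $\chi(X, \calG) = \chi(X^S, \calG|_{X^S})$. This holds because stratifying $X \setminus X^S$ by $S$-orbit types reduces matters to the vanishing of compactly supported Euler characteristic on each non-fixed $S$-orbit (such an orbit admits a non-trivial $\bbG_m$-quotient), combined with the Euler--Poincar\'e formula for $\calG$ restricted to each stratum. Applying this to the conjugation action of $T$ on $G$ and on $B$---under which $\ul{\calF}$ is equivariant---and noting that $G^T = Z_G(T) = T$ (since $T$ is maximal) while $B^T = B \cap T = T$, both sides collapse to $\chi(T, \ul{\calF}|_T)$ and therefore agree. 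The torus-localization input is classical and available in all three relevant sheaf-theoretic settings, so no further obstacle arises.
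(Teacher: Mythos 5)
Your overall strategy is the same as the paper's (reduce to the torus case by proving $\text{Eul}(\ul{\calF}) = \text{Eul}(\ul{pres^G_B\calF})$ and applying Theorem~\ref{thm t-exactness}), but your proof of that equality takes a genuinely different and simpler route than the paper's Claim~\ref{clm euler the same}. The paper works with the description of $pres^G_B$ from Remark~\ref{remark non-equi description 2}, applies torus localization on $G\times G/B$ and on $(\tfrac{G/U\times G/U}{T})_1$, and juggles a factor of $|W|$ that appears and then cancels. You instead use the direct description $\pi_*\iota^!$ from Remark~\ref{remark non-equi description} and apply torus localization for the $T$-conjugation action on $G$ and on $B$ separately, observing that $G^T = Z_G(T) = T$ and $B^T = T$, so both Euler characteristics collapse to the same quantity on $T$. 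This is cleaner, and there is no apparent obstruction to it; I do not see why the paper preferred the longer route.

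One step of yours is wrong as stated, however. The isomorphism $\iota^! \cong \iota^*[-2\dim U](-\dim U)$ for the closed embedding $\iota\colon B\hookrightarrow G$ does \emph{not} hold for an arbitrary constructible complex; it requires $\ul{\calF}$ to be smooth (more precisely, ULA) along $\iota$, which a general perverse sheaf is not. The Euler-characteristic consequence $\chi(B,\iota^!\ul{\calF}) = \chi(B,\iota^*\ul{\calF})$ that you actually need is nevertheless true — it follows from the general principle $\chi = \chi_c$ (Laumon in the $\ell$-adic case, Kashiwara--Dubson for holonomic $D$-modules) together with the open-closed distinguished triangles — but the isomorphism you cite is not the reason. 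The cleanest fix is to dispense with the comparison of $\iota^!$ and $\iota^*$ altogether by using the $!$-version of torus localization, as in the paper's Lemma~\ref{lem euler 2}: writing $\rho\colon T\hookrightarrow G$ and $\sigma\colon T\hookrightarrow B$ for the inclusions (so $\rho = \iota\circ\sigma$), one gets $\chi(G,\ul{\calF}) = \chi(T,\rho^!\ul{\calF})$ and $\chi(B,\iota^!\ul{\calF}) = \chi(T,\sigma^!\iota^!\ul{\calF}) = \chi(T,\rho^!\ul{\calF})$ directly. (A minor point: the holonomic $D$-module case of the torus statement is due to Loeser--Sabbah \cite{LoSa}; Franecki--Kapranov \cite{FrKa} is the complex-analytic case.)
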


\begin{remarks}\label{rem history euler}\
	\begin{enumerate}
		\item When $G$ is a torus, the result is proved in \cite{GaLo} in the $\ell$-adic setting, in \cite{LoSa} in the holonomic $D$-module setting \quash{(see also citation in \cite[Theorem 9.1]{GaLo})} and in \cite{FrKa} in the complex-analytic setting. We will assume the torus case when proving the general case.
	
		\item In the complex-analytic setting, the result is proved in \cite{Ki} where the conjectural statement of the result is attributed to M.~Kapranov.

		\item In the $\ell$-adic setting or the holonomic $D$-module setting (and thus also the complex-analytic setting), assuming in addition that $\calF$ is an intermediate extension from the regular semisimple locus, the result is proved in \cite{Br}.

		\item Thus, we provide a new proof of the result, which holds in the complex-analytic setting as well as in the formerly unestablished $\ell$-adic setting.
		
	\end{enumerate}
\end{remarks}

In order to prove theorem \ref{thm euler}, let us first establish the following claim:

\begin{claim}\label{clm euler the same}
	Let $\calF \in D(G \backslash G)$. Then $$ \text{Eul} (\ul{pres^G_B (\calF)}) = \text{Eul} (\ul{\calF})$$ (where $B \subset G$ is a Borel subgroup).
\end{claim}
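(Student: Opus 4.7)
The plan is to combine Braden's theorem (Proposition \ref{prop Braden}) with the vanishing of Euler characteristic for $\bbG_m$-equivariant sheaves on varieties with no $\bbG_m$-fixed points. By Remark \ref{remark non-equi description}, $\ul{pres^G_B(\calF)} \cong \pi_* \iota^! \ul{\calF}$, where $\iota : B \hookrightarrow G$ and $\pi : B \to T$; since Euler characteristic is preserved under pushforward to a point, it is enough to show that $\text{Eul}(\iota^! \ul{\calF}) = \text{Eul}(\ul{\calF})$. Fix a regular cocharacter $\lambda : \bbG_m \to T$ so that the attractor of $\lambda$-conjugation on $G$ is $B^-$, the repeller is $B$, and the fixed locus is $T$. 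Applying the symmetric version of Proposition \ref{prop Braden} (with the roles of $B$ and $B^-$ interchanged), we obtain $\pi^-_! (\iota^-)^* \ul{\calF} \cong \pi_* \iota^! \ul{\calF}$, and hence $\text{Eul}(\iota^! \ul{\calF}) = \text{Eul}((\iota^-)^* \ul{\calF})$.

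The localization triangle for the closed embedding $\iota^- : B^- \hookrightarrow G$ with open complement $j : G \setminus B^- \hookrightarrow G$ gives
$$ \text{Eul}(\ul{\calF}) = \text{Eul}((\iota^-)^* \ul{\calF}) + \text{Eul}(j^* \ul{\calF}),$$
so the claim reduces to showing that $\text{Eul}(j^* \ul{\calF}) = 0$. But $G \setminus B^-$ is $\lambda(\bbG_m)$-invariant (since $B^-$ is normalized by $T \supset \lambda(\bbG_m)$) and contains no $\lambda(\bbG_m)$-fixed points (the full fixed locus lies in $T \subset B^-$); therefore every $\bbG_m$-stabilizer on $G \setminus B^-$ is finite. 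Since $\calF$ is $G$-equivariant, $j^* \ul{\calF}$ is $\bbG_m$-equivariant. Stratifying $G \setminus B^-$ by orbit type, each stratum fibers as a $\bbG_m$-bundle over its quotient; multiplicativity of Euler characteristic for such bundles together with $\text{Eul}(\bbG_m) = 0$ yields the desired vanishing.

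The only potentially delicate point is this final $\bbG_m$-vanishing, which rests on the standard multiplicativity of Euler characteristic for $\bbG_m$-equivariant sheaves; one should confirm this in each of the three admitted sheaf-theoretic settings, but in all of them it is well-known (via an explicit stratification argument, or via a Lefschetz-Verdier-type trace formula). No other step presents a real obstacle: Braden's theorem is already stated for $G$-monodromic sheaves (which includes all of $\ul{\calF}$ coming from $D(G\backslash G)$), and the reduction via Remark \ref{remark non-equi description} is essentially formal.
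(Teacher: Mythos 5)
Your proof is correct, but it follows a genuinely different path from the paper's. The paper works with the correspondence of Remark~\ref{remark non-equi description 2}: it $!$-pulls back to $G\times G/B$, notes $\mathrm{Eul}(pr^!\ul\calF)=|W|\cdot\mathrm{Eul}(\ul\calF)$ (Lemma~\ref{lem euler 1}), observes that the $T$-fixed points of $G\times G/B$ all lie in $(G\times G/B)_1$, passes across the diagram with Lemma~\ref{lem euler 2}, and identifies the $T$-fixed locus of $(\frac{G/U\times G/U}{T})_1$ as $|W|$ disjoint copies of $T$, cancelling the $|W|$. You instead work with the simpler correspondence of Remark~\ref{remark non-equi description}, directly reducing the claim to $\mathrm{Eul}(\iota^!\ul\calF)=\mathrm{Eul}(\ul\calF)$ and then to the vanishing of the Euler characteristic on the complement of a Borel. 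That vanishing is, in effect, exactly the content of the paper's Lemma~\ref{lem euler 2} specialised to a space with empty fixed locus, so the ``delicate point'' you flag is already what the paper is prepared to use; you are not on thinner ice than the paper itself.

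One genuine simplification is available to you: the Braden step is unnecessary. Since $\iota : B\hookrightarrow G$ is already a \emph{closed} embedding, the $!$-localization triangle
\[
\iota_*\iota^!\ul\calF \longrightarrow \ul\calF \longrightarrow j_*j^*\ul\calF \longrightarrow{}[1],
\]
with $j : G\setminus B\hookrightarrow G$ open, gives
$\mathrm{Eul}(\ul\calF)=\mathrm{Eul}(\iota^!\ul\calF)+\mathrm{Eul}(j^!\ul\calF)$ directly, and $\mathrm{Eul}(j^!\ul\calF)=0$ because $G\setminus B$ has no fixed points for the $T$-conjugation action ($G^T=Z_G(T)=T\subset B$). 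There is no need to convert $\iota^!$ into $(\iota^-)^*$ via Proposition~\ref{prop Braden} and then localize at $B^-$; the two localization computations you do (implicitly once on each side) collapse into one. Equivalently: apply Lemma~\ref{lem euler 2} to the $T$-action on $G$ and then again on $B$ (both with fixed locus $T$), and compare. Your route is sound, it just invokes one more theorem than is needed; in compensation, your approach avoids the flag variety, the $|W|$ bookkeeping, and the intermediate space $(\frac{G/U\times G/U}{T})_1$ entirely, which makes it arguably cleaner than the paper's.
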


\begin{proof}
	For the proof, we choose a splitting of $B \to T$, so a maximal torus $T \subset B$.
	
	\medskip
	
	Denote by $pr : G \times G/B \to G$ the projection. By lemma \ref{lem euler 1}, we have $$\text{Eul} (pr^! \ul{\calF}) = \text{Eul} (\omega_{G/B}) \cdot \text{Eul} (\ul{\calF}) = |W| \cdot \text{Eul} (\ul{\calF}).$$ With respect to the standard $G$-action on $G \times G/B$ (by $g(h , xB) = (ghg^{-1} , gx B)$), the $T$-fixed points are of the form $(t , wB)$ for $t \in T , w \in W$. In particular, they all lie in $(G \times G/B)_1$ (see remark \ref{remark non-equi description 2} for this notation). Hence, by lemma \ref{lem euler 2}, working in the notations of remark \ref{remark non-equi description 2}, we have $$ \text{Eul} (pr^! \ul{\calF}) = \text{Eul} (\ul{p}^! \ul{\calF}) = \text{Eul} (\wt{q}_* \ul{p}^! \ul{\calF}).$$ Now, with respect to the diagonal $G$-action on $(\frac{G/U \times G/U}{T})_1$, the $T$-fixed points consist of the disconnected union of copies of $T$: $(wU , wTU)$ for $w \in W$. By lemma \ref{lem euler 2} and $G$-equivariancy, we obtain: $$ \text{Eul} (\wt{q}_* \ul{p}^! \ul{\calF}) = |W| \cdot \text{Eul} (i^! \wt{q}_* \ul{p}^! \ul{\calF}) = |W| \cdot \text{Eul} (\ul{pres^G_B (\calF)}).$$
\end{proof}

Now we are ready to prove theorem \ref{thm euler}.

\begin{proof}[Proof (of theorem \ref{thm euler}).]
	Let $\calF \in D(G \backslash G)^{= 0}$. By claim \ref{clm euler the same}, we have $ \text{Eul} (\ul{\calF}) = \text{Eul} (\ul{pres^G_B (\calF)})$. Since by theorem \ref{thm t-exactness} one has $pres^G_B (\calF) \in D()^{ = 0}$, in view of the torus case (remark \ref{rem history euler}.(1)) the theorem follows.
\end{proof}

We record here the following two lemmas which we have used above:

\begin{lemma}\label{lem euler 1}
	Let $X,Y$ be varieties, and let $\calG \in D(X)$. Denote by $\text{pr} : X \times Y \to X$ the projection. Then $$ \text{Eul} (\text{pr}^! \calG) = \text{Eul}(\omega_Y) \cdot \text{Eul} (\calG).$$
\end{lemma}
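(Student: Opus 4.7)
The plan is to recognize $\mathrm{pr}^! \calG$ as an external tensor product and then invoke the Künneth formula for $*$-pushforward. Write $\mathrm{pr} = p_X$ and let $p_Y : X \times Y \to Y$ be the other projection; denote by $a_X, a_Y$ the structural maps to a point. Applying the Künneth formula for $!$-pullback to the product of morphisms $\mathrm{id}_X \times a_Y : X \times Y \to X \times \bullet = X$, one obtains
$$ \mathrm{pr}^! \calG \;=\; (\mathrm{id}_X \times a_Y)^! (\calG \boxtimes \mathbf{1}) \;\cong\; (\mathrm{id}_X^! \calG) \boxtimes (a_Y^! \mathbf{1}) \;=\; \calG \boxtimes \omega_Y, $$
where $\mathbf{1}$ denotes the unit object on a point and $\omega_Y = a_Y^! \mathbf{1}$ is the dualizing complex on $Y$.

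Next I would apply the Künneth formula for $*$-pushforward along the product $a_X \times a_Y : X \times Y \to \bullet$:
$$ (a_X \times a_Y)_* (\calG \boxtimes \omega_Y) \;\cong\; a_{X,*} \calG \;\otimes\; a_{Y,*} \omega_Y. $$
Taking the alternating sum of dimensions of cohomology on both sides, and using the multiplicativity $\mathrm{Eul}(V^\bullet \otimes W^\bullet) = \mathrm{Eul}(V^\bullet) \cdot \mathrm{Eul}(W^\bullet)$ for bounded complexes of vector spaces over the coefficient field, yields
$$ \mathrm{Eul}(\mathrm{pr}^! \calG) \;=\; \mathrm{Eul}(\calG) \cdot \mathrm{Eul}(\omega_Y), $$
as asserted.

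There is no genuine obstacle here; the argument is purely formal. The only point requiring attention is that both Künneth identifications — for $!$-pullback along a product of morphisms and for $*$-pushforward to a point — are available uniformly in the three sheaf-theoretic contexts relevant to this section (holonomic $D$-modules, $\ell$-adic sheaves, and complex-analytic constructible sheaves), which is standard in each.
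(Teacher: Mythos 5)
The paper records this lemma without proof, treating it as a standard fact, so there is no paper argument to compare against. Your proof is correct. The two ingredients — the identification $\text{pr}^!\calG \cong \calG \boxtimes \omega_Y$ and the K\"unneth formula for $*$-pushforward to a point — are both valid in the three sheaf-theoretic contexts relevant to this section. One remark: the $*$-pushforward K\"unneth is slightly less commonly stated than the $!$-pushforward version; if you prefer to cite only the latter, you could instead apply the K\"unneth formula for $!$-pushforward to $\calG \boxtimes \omega_Y$ and then invoke the equality $\text{Eul}(p_*\calF) = \text{Eul}(p_!\calF)$ (Laumon's theorem in the $\ell$-adic setting, classical otherwise). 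Alternatively, the $*$-pushforward K\"unneth you use follows from the $!$-pushforward one by Verdier duality together with the compatibility $\mathbb{D}(\calA \boxtimes \calB) \cong \mathbb{D}\calA \boxtimes \mathbb{D}\calB$, so either route rests on the same standard facts. Your argument is clean and self-contained.
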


\begin{lemma}\label{lem euler 2}
	Let $X$ be a variety equipped with an action of a torus $T$. Let $\calF \in D(T \backslash X)$. Denoting $j : X^T \hookrightarrow X$, we have $$ \text{Eul} (\ul{\calF}) = \text{Eul} (j^! \ul{\calF}).$$
\end{lemma}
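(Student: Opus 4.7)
The strategy is a torus localization argument: combine additivity of Euler characteristic along closed/open decompositions with the vanishing of Euler characteristic on spaces carrying a generically free action of a positive-dimensional torus.

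First, let $u : X \setminus X^T \hookrightarrow X$ be the complementary open embedding. From the distinguished triangle
$$j_* j^! \ul{\calF} \longrightarrow \ul{\calF} \longrightarrow u_* u^* \ul{\calF}$$
together with the classical fact that ordinary and compactly supported Euler characteristics coincide for constructible complexes on algebraic varieties, one obtains the additivity
$$\text{Eul}(\ul{\calF}) = \text{Eul}(j^! \ul{\calF}) + \text{Eul}(u^* \ul{\calF}).$$
It therefore suffices to show $\text{Eul}(\ul{\calF}) = 0$ under the extra assumption $X^T = \emptyset$.

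Assuming $X^T = \emptyset$, I would choose a finite $T$-invariant stratification $X = \bigsqcup_\alpha S_\alpha$ by locally closed subvarieties on which the cohomology sheaves of $\ul{\calF}$ are locally constant and on which the identity components of the point-stabilizers form a common proper subtorus $T_\alpha \subsetneq T$; such a stratification exists by standard results on algebraic group actions. By additivity it suffices to verify $\text{Eul}(\ul{\calF}|_{S_\alpha}) = 0$ for each $\alpha$. Since $T/T_\alpha$ is positive-dimensional, after shrinking $S_\alpha$ to a $T$-invariant dense open (which does not affect Euler characteristics, by additivity again), the map $S_\alpha \to S_\alpha/T$ becomes a principal $(T/T_\alpha)$-bundle along which the $T$-equivariant complex $\ul{\calF}|_{S_\alpha}$ descends. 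Refining the stratification further so that this principal bundle trivializes, Lemma \ref{lem euler 1} applied to the resulting product gives
$$\text{Eul}(\ul{\calF}|_{S_\alpha}) = \text{Eul}(T/T_\alpha) \cdot \text{Eul}(\text{descended complex}) = 0,$$
since $\text{Eul}(T/T_\alpha) = 0$ for any positive-dimensional torus.

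The main obstacle is precisely this last multiplicativity step, because Lemma \ref{lem euler 1} is formulated for an honest product rather than a Zariski-locally trivial bundle. The remedy is either to refine the stratification until the relevant principal bundle becomes Zariski-trivial on each piece, or to exploit the invariance of Euler characteristic under finite étale covers (up to the degree factor) in order to trivialize the bundle after an étale base change.
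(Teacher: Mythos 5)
The paper records Lemma \ref{lem euler 2} without proof; it is a form of the classical torus localization theorem for Euler characteristics, so there is no argument in the paper to compare against. Your excision-plus-stratification strategy is the standard one and is essentially correct. The worry you flag at the end is in fact not an obstacle: a torsor under a torus over a scheme is always Zariski-locally trivial (tori are special groups in the sense of Serre, by Hilbert~90 applied factor by factor), so your first proposed remedy --- refine the stratification until the bundle is a product --- goes through without further ado.

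What does deserve more care is the claim that $S_\alpha \to S_\alpha / T$ is a principal $(T/T_\alpha)$-bundle. You only arranged that the \emph{identity components} of the stabilizers equal $T_\alpha$; the full stabilizer at a point of $S_\alpha$ may be a nontrivial finite extension of $T_\alpha$, in which case the quotient map is a torsor only under the smaller (still positive-dimensional) torus $(T/T_\alpha)/\bar\Gamma$, with $\bar\Gamma$ the generic finite stabilizer inside $T/T_\alpha$. Moreover, the $T$-equivariant complex need not descend along this torsor: on a local trivialization its restriction to a fiber is a local system with finite but possibly nonconstant monodromy (it trivializes only after pulling back along the isogeny $T/T_\alpha \to (T/T_\alpha)/\bar\Gamma$). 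One therefore cannot invoke Lemma \ref{lem euler 1} verbatim. The conclusion nonetheless holds: a local system of finite monodromy on a positive-dimensional torus has vanishing Euler characteristic, since $\chi_c$ is multiplicative along finite \'etale covers and $\chi_c(\bbG_m^k)=0$. Note that in the $\ell$-adic setting over a field of positive characteristic this finiteness of monodromy --- hence tameness --- is exactly what equivariance is supplying; without it, wildly ramified local systems on $\bbG_m$ have strictly negative Euler characteristic and the vanishing would fail. Finally, the stabilizer bookkeeping is simpler if you first reduce to $T=\bbG_m$ by filtering $T$ by subtori and localizing one $\bbG_m$ at a time.
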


\subsection{Application to generic perversity}

In this subsection, we restrict ourselves to the $\ell$-adic setting. We fix a Borel $B \subset G$ with Levi $T$.

\medskip

In \cite{GaLo}, a $\bar{\bbQ}_{\ell}$-scheme $\calC (T)$ is defined, whose $\bar{\bbQ}_{\ell}$-points are the isomorphism classes of tame local systems of rank $1$  on $T$. Let us denote by $\calL_{\chi}$ the perverse local system of rank $1$ on $T$ corresponding to $\chi \in \calC(T)(\bar{\bbQ}_{\ell})$. Recall also the notion of \emph{character sheaves} in $D(G \backslash G)$ with \emph{central character} $\chi$ (or $\calL_{\chi}$). 

\medskip

\begin{proposition}\label{prop gen perv}
	Let $\calF \in D(G \backslash G)^{= 0}$. For a generic $\chi \in \calC (T) (\bar{\bbQ}_{\ell})$ (depending on $\calF$), given a character sheaf $\calG \in D(G \backslash G)^{=0}$ with central character $\chi$, one has $$\calF * \calG \in D(G \backslash G)^{=0}$$ (where $``*"$ denotes the $!$-convolution on the group).
\end{proposition}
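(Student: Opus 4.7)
The strategy is to reduce to the torus case treated in \cite{GaLo} by using parabolic induction and restriction, exploiting the $t$-exactness established in Theorem \ref{thm t-exactness}.

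\medskip

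\textbf{Step 1: Reduction to $\calG = pind^B_G(\calL_\chi)$.} First I would recall the standard classification statement: for $\chi$ in a suitable open dense subset of $\calC(T)(\bar{\bbQ}_\ell)$ (namely those $\chi$ whose $W$-stabilizer is trivial), the object $pind^B_G(\calL_\chi)$ is perverse (by Theorem \ref{thm t-exactness}) and decomposes, as a perverse sheaf on $G\backslash G$, into a direct sum (with multiplicities) of all simple perverse character sheaves with central character $\chi$. Consequently, every perverse character sheaf $\calG$ with central character $\chi$ is a direct summand of $pind^B_G(\calL_\chi)$, and since perversity is preserved under direct summands it suffices to show that $\calF * pind^B_G(\calL_\chi)$ is perverse for generic $\chi$.

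\medskip

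\textbf{Step 2: A projection formula.} The key step is to establish a formula of the form
$$\calF * pind^B_G(\calH) \;\cong\; pind^B_G\bigl(pres^G_{B^-}(\calF) * \calH\bigr),$$
valid for $\calF \in D(G \backslash G)$ and $\calH \in D(T \backslash T)$, where the convolution on the right-hand side is the $!$-convolution on $T$ and the opposite Borel $B^-$ enters because of the hyperbolic-localization/second-adjunction relationship (Theorem \ref{thm second adj}, Proposition \ref{prop Braden}). I would prove this by unwinding both sides as push-pulls along the respective correspondences, performing base change against the multiplication map $m : G \times G \to G$, and identifying the resulting iterated correspondences. This is the main technical step of the argument and essentially the only obstacle.

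\medskip

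\textbf{Step 3: Conclusion by reduction to the torus.} Applying Step 2 with $\calH = \calL_\chi$, it suffices to show that $pind^B_G\bigl(pres^G_{B^-}(\calF) * \calL_\chi\bigr)$ is perverse for generic $\chi$. By Theorem \ref{thm t-exactness}, the object $\calH_0 := pres^G_{B^-}(\calF)$ is a perverse sheaf on $T$. By the torus case recalled in the introduction (a consequence of $t$-exactness and conservativity of the Mellin transform of \cite{GaLo}), for $\chi$ outside a finite union of cosets of proper subtori depending on $\calH_0$ one has
$$\calH_0 * \calL_\chi \;\cong\; V_{\calH_0}(\chi) \otimes \calL_\chi \;\in\; D(T\backslash T)^{=0}.$$
Intersecting this genericity condition with the one from Step 1 gives the generic set of $\chi$ required by the proposition. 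Finally, since $pind^B_G$ is $t$-exact by Theorem \ref{thm t-exactness}, applying it to the perverse sheaf $\calH_0 * \calL_\chi$ yields a perverse sheaf on $G\backslash G$, and the summand $\calF * \calG$ is perverse as desired.

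\medskip

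The principal obstacle is Step 2, the projection formula; Steps 1 and 3 are then essentially formal once one has that formula together with the $t$-exactness theorem and the torus result of \cite{GaLo}.
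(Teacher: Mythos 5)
Your Steps 1 and 3 mirror the paper's argument exactly. But Step 2 as you state it is false: the ``projection formula''
$$\calF * pind^B_G(\calH) \;\cong\; pind^B_G\bigl(pres^G_{B^-}(\calF) * \calH\bigr)$$
does not hold for arbitrary $\calH \in D(T\backslash T)$. The two sides do not even agree for $\calH = \omega_T$ in general, for the same reason that $pres^G_{B} \circ pind^B_G$ is not the identity: the Mackey/Bruhat filtration contributes terms indexed by all of $W$, not just $w = e$. Unwinding the correspondences (as you propose) would produce all those extra strata, and there is no base-change trick that makes them vanish without a further hypothesis on $\calH$.

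The paper avoids this by first passing through the Harish-Chandra transform, writing
$$\calF * pind^B_G \calL_{\chi} \cong \textnormal{CH}\bigl(\textnormal{HC}_* (\calF) * \calL_{\chi}\bigr),$$
and then using the genericity of $\chi$ (trivial $W$-stabilizer) to prove a \emph{concentration} statement: $\textnormal{HC}_* (\calF) * \calL_{\chi}$ is supported on the closed substack $i : B\backslash T \hookrightarrow G\backslash (G/U\times G/U)/T$, i.e.\ $\textnormal{HC}_* (\calF) * \calL_{\chi} \cong i_* i^!(\textnormal{HC}_* (\calF) * \calL_{\chi})$. Only after that restriction-to-the-closed-stratum does the calculation collapse, via $\pi_* i^! \textnormal{HC}_* \cong pres^G_B$, to
$$\calF * pind^B_G \calL_{\chi} \cong pind^B_G\bigl(pres^G_B \calF * \calL_{\chi}\bigr).$$
So genericity of $\chi$ is essential for your Step 2, not just for Steps 1 and 3. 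If you revise Step 2 to claim the formula only for $\calH = \calL_\chi$ with $\chi$ generic, and supply the concentration argument (which is where the work lives), your plan becomes the paper's proof. Also note the paper gets $pres^G_B$, not $pres^G_{B^-}$; since both are $t$-exact by Theorem \ref{thm t-exactness} this would not affect the final conclusion, but it indicates that the hyperbolic-localization heuristic you used to guess the formula isn't quite the mechanism at play.
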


\begin{proof}
	
	\emph{The torus case:} We want to check that for $\calF \in D(T)^{=0}$ and a generic $\chi \in \calC(T)$, one has $\calF * \calL_{\chi} \in D(T)^{=0}$. Recall the \emph{Mellin transform} $$ \calM_* : D(T) \to D^b_{coh} (\calC(T))$$ from \cite{GaLo}. By \cite[Theorem 3.4.7]{GaLo}, it is enough to check that $\calM_* (\calF * \calL_{\chi} )$ sits in degree $0$. By \cite[Proposition 3.3.1 (f)]{GaLo}, one has $$ \calM_* (\calF * \calL_{\chi}) \cong \calM_* (\calF) \otimes \calM_* (\calL_{\chi})$$ (here the ``$\otimes$" is in the derived sense). Since $\calM_* (\calF)$ sits in degree zero and, by \cite[Theorem 3.4.3]{GaLo}, is generically locally free, and $\calM_* (\calL_{\chi}) \cong (i_{\chi})_* \bar{\bbQ}_{\ell}$ (where $i_{\chi}$ is the inclusion of the point correponding to $\chi$), we see that indeed that $\calM_* (\calF) \otimes \calM_* (\calL_{\chi})$ sits in degree zero for generic $\chi$. 
	
	\medskip
	
	\emph{The general case:} We will reduce the general case to the torus case. Recall the morphisms $i$ and $\pi$ from \S\ref{sec HC transform}. Also, recall the transform $$ \textnormal{CH} : D(G \backslash (G/U \times G/U) / T) \to D(G \backslash G)$$ left adjoint to $\textnormal{HC}_*$.
	
	\medskip
	
	Since $\chi$ is generic (so its stabilizer in $W$ is trivial), $\calG$ is a direct summand of $pind^B_G \calL_{\chi}$, and hence we may assume $\calG = pind^B_G \calL_{\chi}$.
	
	\medskip
	
	We have: $$ \calF * pind^B_G \calL_{\chi} \cong \textnormal{CH} (\textnormal{HC}_* (\calF) * \calL_{\chi})$$ (where the latter ``$*$" denotes $!$-convolution w.r.t. the right action of $T$ on $G \backslash (G/U \times G/U) / T$ given by $(xU,yU)*t = (xU , ytU)$). Since $\chi$ is generic (so its stabilizer in $W$ is trivial), we have $$\textnormal{HC}_* (\calF) * \calL_{\chi} = i_* i^! (\textnormal{HC}_* (\calF) * \calL_{\chi}) \cong i_* \pi^* \pi_* i^! (\textnormal{HC}_* (\calF) * \calL_{\chi}) \cong $$ $$ \cong i_* \pi^*( ( \pi_* i^! \textnormal{HC}_* (\calF)) * \calL_{\chi}) \cong i_* \pi^* (pres^G_B \calF * \calL_{\chi})$$ and thus $$ \calF * pind^B_G \calL_{\chi} \cong \textnormal{CH} (i_* \pi^* (pres^G_B \calF * \calL_{\chi})) \cong pind^B_G (pres^G_B \calF * \calL_{\chi}).$$ Now the claim is clear by the $t$-exactness of $pind^B_G$ and $pres^G_B$ and the torus case above.
\end{proof}


\begin{thebibliography}{999}
         \bibitem[Be]{Beil} A. Beilinson, {\em Constructible sheaves are holonomic,}
   Selecta Math. (N.S.) {\bf 22} (2016), no. 4, 1797-1819. 
   
	\bibitem[BeFiOs]{BeFiOs} R. Bezrukavnikov, M. Finkelberg and V. Ostrik, {\it Character D-modules via Drinfeld center of Harish-Chandra bimodules}, Invent. Math. {\bf 188} (2012), 589-620.

\bibitem[Bra]{Braden} T. Braden, {\em Hyperbolic localization of Intersection Cohomology,} Transformation Groups {\bf 8} (2003), no. 3,
209-216. 

	\bibitem[Br]{Br} A. Braverman, {\it On Euler characteristic of equivariant sheaves}, Adv. Math. {\bf 179} (2003), 1-6.

	\bibitem[ChGi]{ChGi} N. Chriss and V. Ginzburg, {\it Representation Theory and Complex Geometry}, Birkh\"auser Boston (1997).

	\bibitem[ChYo]{ChYo} T.H. Chen and A. Yom Din, {\it A formula for the geometric Jacquet functor and its character sheaf analog}, GAFA {\bf 27(4)} (2017), 772-797.

	\bibitem[DrGa1]{DrGa1} V. Drinfeld and D. Gaitsgory, {\it On some finiteness questions for algebraic stacks}, GAFA {\bf 23} (2013), 149-294.
	
	\bibitem[DrGa2]{DrGa2} V. Drinfeld and D. Gaitsgory, {\it On a theorem of Braden}, Transform. Groups {\bf 19} (2014), 313-358.
	
	\bibitem[FrKa]{FrKa} J. Franecki and M. Kapranov, {\it The Gauss map and a noncompact Riemann-Roch formula for constructible sheaves on semiabelian varieties}, Duke Math. J. {\bf 104} (2000), 171-180.

	\bibitem[GaLo]{GaLo} O. Gabber and F. Loeser, {\it Faisceaux pervers $\ell$-adiques sur un tore}, Duke Math. J. {\bf 83} (1996), 501-606.

	\bibitem[Gu]{Gu} S. Gunningham, {\it Generalized Springer Theory for D-modules on a Reductive Lie Algebra}, to appear in Selecta Math. (N.S.), preprint available at \url{https://arxiv.org/abs/1510.02452}.

	\bibitem[Hu]{Hu} J. E. Humphreys, {\it Conjugacy classes in semisimple algebraic groups}, Math. Surveys and Monographs {\bf 43} (1995), Amer. Math. Soc., Providence, R.I..

	\bibitem[Ki]{Ki} V. Kiritchenko, {\it A Gauss-Bonnet theorem for constructible sheaves on reductive groups}, Math. Res. Lett. {\bf 9(6)} (2002), 791-800.

	\bibitem[LoSa]{LoSa} F. Loeser and C. Sabbah, {\it Caract\'erisation des D-modules hyperg\'eom\'etriques irreductibles sur le tore I}, C. R. Acad. Sci. Paris S\'er. I Math. {\bf 312} (1991), 735-738; II, {\bf 315} (1992), 1263-1264.

	\bibitem[Lu1]{Lu1} G. Lusztig, {\it Intersection cohomology complexes on a reductive group}, Invent. Math. {\bf 75} (1984), 205-272.

	\bibitem[Lu2]{Lu2} G. Lusztig, {\it Character sheaves I}, Adv. Math. {\bf 56} (1985), 193-237.
	
	\bibitem[Ra]{Ra} S. Raskin, {\it A generalization of the b-function lemma}, preprint available at \url{https://arxiv.org/abs/1611.04940}
	.
	\bibitem[Sa]{Sai}  T. Saito,  {\em The characteristic cycle and the singular support of a constructible sheaf,}
 Invent. Math. {\bf 207} (2017), no. 2, 597-695. 
\end{thebibliography}
\end{document}